\newtheorem{lemma}{Lemma}[section]
\newtheorem{theorem}[lemma]{Theorem}
\newtheorem{corollary}[lemma]{Corollary}
\newtheorem{proposition}[lemma]{Proposition}
\newtheorem*{proposition1}{Proposition~\ref{pro:sections}}
\newtheorem*{theorem2}{Theorem~\ref{thm:main1}}
\theoremstyle{definition}
\newtheorem{algorithm}[lemma]{Algorithm}
\newtheorem{remark}[lemma]{Remark}
\newtheorem{example}[lemma]{Example}
\newtheorem*{future}{Future directions}
\newtheorem*{conventions}{Conventions}
\newtheorem*{acknowledgements}{Acknowledgements}
\newcommand{\define}[1]{{\bfseries\itshape #1}}
\newcommand{\relphantom}[1]{\mathrel{\phantom{#1}}}
\newcommand{\CC}{\ensuremath{\mathbb{C}}} 
\newcommand{\NN}{\ensuremath{\mathbb{N}}}
\newcommand{\PP}{\ensuremath{\mathbb{P}}} 
\newcommand{\RR}{\ensuremath{\mathbb{R}}} 
\newcommand{\ZZ}{\ensuremath{\mathbb{Z}}} 
\newcommand{\cE}{\ensuremath{\mathcal{E}}} 
\newcommand{\cF}{\ensuremath{\mathcal{F}}} 
\newcommand{\cG}{\ensuremath{\mathcal{G}}} 
\newcommand{\cH}{\ensuremath{\mathcal{H}}} 
\newcommand{\cL}{\ensuremath{\mathcal{L}}} 
\newcommand{\cO}{\ensuremath{\mathcal{O}}} 
\newcommand{\cQ}{\ensuremath{\mathcal{Q}}} 
\newcommand{\cT}{\ensuremath{\mathcal{T}}} 
\newcommand{\sB}{\ensuremath{\mathscr{B}}} 
\newcommand{\ee}{\ensuremath{\mathbf{e}}} 
\newcommand{\uu}{\ensuremath{\mathbf{u}}} 
\newcommand{\vv}{\ensuremath{\mathbf{v}}} 
\newcommand{\ww}{\ensuremath{\mathbf{w}}} 
\renewcommand{\geq}{\geqslant}
\renewcommand{\leq}{\leqslant}
\newcommand{\into}{\hookrightarrow}
\renewcommand{\div}{\mydiv}
\DeclareMathOperator{\conv}{conv}
\DeclareMathOperator{\Div}{Div}
\DeclareMathOperator{\Mat}{M}
\DeclareMathOperator{\mydiv}{div}
\DeclareMathOperator{\Hom}{Hom}
\DeclareMathOperator{\Poset}{L}
\DeclareMathOperator{\pos}{pos}
\DeclareMathOperator{\Pic}{Pic}
\DeclareMathOperator{\rank}{rank}
\DeclareMathOperator{\Span}{span}
\DeclareMathOperator{\Spec}{Spec}
\DeclareMathOperator{\Sym}{Sym}
\DeclareMathOperator{\variety}{V}
\begin{document}

\vspace*{-3.0em}

\title{Toric vector bundles and parliaments of polytopes}

\author[S.~Di~Rocco]{Sandra Di Rocco} \address{Sandra Di Rocco\\ Department of
  Mathematics\\ Royal Institute of Technology (KTH)\\ 10044 Stockholm\\
  Sweden \\ 
  \href{mailto:dirocco@math.kth.se}%
  {{\ttfamily\upshape dirocco@math.kth.se}}}

\author[K.~Jabbusch]{Kelly Jabbusch}
\address{Kelly Jabbusch\\ Department of Mathematics \& Statistics\\ University
  of Michigan--Dearborn \\ 4901 Evergreen Road \\ Dearborn, Michigan
  48128-2406\\ USA \\ 
  \href{mailto:jabbusch@umich.edu}%
  {{\ttfamily\upshape jabbusch@umich.edu}}}

\author[G.G.~Smith]{Gregory G. Smith}
\address{Gregory G. Smith\\ Department of Mathematics and Statistics \\
  Queen's University\\ Kingston \\ Ontario \\ K7L~3N6\\ Canada \\
  \href{mailto:ggsmith@mast.queensu.ca}%
  {{\ttfamily\upshape ggsmith@mast.queensu.ca}}}

\subjclass[2010]{14M25; 14J60, 51M20}


\begin{abstract}
  We introduce a collection of convex polytopes associated to a
  torus-equivariant vector bundle on a smooth complete toric variety. We show
  that the lattice points in these polytopes correspond to generators for the
  space of global sections and we relate edges to jets.  Using the polytopes,
  we also exhibit toric vector bundles that are ample but not globally
  generated, and toric vector bundles that are ample and globally generated
  but not very ample.
\end{abstract}

\maketitle

\section{Overview of Results}
\label{sec:intro}

\noindent
The importance and prevalence of toric varieties stems from their
calculability and their close relation to polyhedral objects.  The challenge
is to emulate this success and enlarge the class of varieties with both
features.  Rather than contemplating spherical varieties or all $T$-varieties,
we extend the theory of toric varieties by studying torus-equivariant vector
bundles and their projective bundles.  Motivated by the ensuing
simplifications in the toric dictionary between line bundles and polyhedra, we
concentrate on vector bundles over a smooth complete toric variety.  The goal
of this paper is to give explicit polyhedral interpretations for properties of
these vector bundles.

To accomplish this goal, we fix a smooth complete toric variety $X$, over
$\CC$, associated to the fan $\Sigma$.  Let $M$ denote the character lattice
of the dense torus in $X$ and write
$\vv_1, \vv_2, \dotsc, \vv_n \in \Hom_\ZZ(M,\ZZ)$ for the unique minimal
generators of the rays in $\Sigma$.  A \define{toric vector bundle} on $X$ is
a torus-equivariant locally-free $\cO_X$-module $\cE$ of finite rank $r$.  The
celebrated Klyachko classification proves that $\cE$ corresponds to a
finite-dimensional vector space $E \cong \CC^r$ equipped with compatible
decreasing filtrations
$E \supseteq \dotsb \supseteq E^{i}(j) \supseteq E^{i}(j+1) \supseteq \dotsb
\supseteq 0$ where $1 \leq i \leq n$ and $j \in \ZZ$; see
Section~\ref{sec:bundles}.  This collection of linear subspaces embeds into
the lattice of flats for a distinguished matroid $\Mat(\cE)$.  For each
element $\ee$ in the ground set of the matroid $\Mat(\cE)$, we introduce the
convex polytope
\[
  P_\ee := \bigl\{ \uu \in M \otimes_\ZZ \RR :
\text{$\langle \uu, \vv_i \rangle \leq \max \bigl( j \in \ZZ : \ee \in
  E^{i}(j) \bigr)$ for all $1 \leq i \leq n$} \bigr\} \, .
\]  
The set of all such polytopes $P_\ee$ is called the \define{parliament of
  polytopes} for $\cE$; see Section~\ref{sec:globalSections}.  Although the
defining half-spaces for the polytopes $P_\ee$ together with the elements
$\ee$ in the ground set of $\Mat(\cE)$ encode the filtrations, the polytopes
themself may be empty; compare with Remark~\ref{rem:gens}.

The following result gives the first substantive connection between the
parliament of polytopes and the toric vector bundle.

\begin{proposition}
  \label{pro:sections}
  The lattice points in the polytopes of the parliament for $\cE$ correspond
  to the torus-equivariant generators for the space of global sections of
  $\cE$.
\end{proposition}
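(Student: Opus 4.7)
The plan is to reduce the statement to the Klyachko formula for the weight-space decomposition of $H^0(X,\sE)$ and then match the condition $\uu \in P_\ee$ term by term with membership in the appropriate weight space. Since $X$ is complete, the torus $T$ acts on $H^0(X,\sE)$ and decomposes it as
\[
H^0(X,\sE) = \bigoplus_{\uu \in M} H^0(X,\sE)_\uu \, ,
\]
so a torus-equivariant generating set just means a set of $T$-eigenvectors that spans, and it suffices to exhibit a spanning set for each weight space $H^0(X,\sE)_\uu$.

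First I would recall Klyachko's description of the weight spaces: under the correspondence between $\sE$ and the filtered vector space $E$ from~\S\ref{sub:bundles}, one has
\[
H^0(X,\sE)_\uu \;=\; \bigcap_{i=1}^{n} E^{\vv_i}\bigl( \langle \uu,\vv_i \rangle \bigr) \, .
\]
Because each filtration $E^{\vv_i}(\bullet)$ is decreasing in $j$, the condition $\ee \in E^{\vv_i}(\langle \uu,\vv_i \rangle)$ is equivalent to $\langle \uu,\vv_i \rangle \leq \max\bigl( j \in \ZZ : \ee \in E^{\vv_i}(j) \bigr)$. Running this over all rays shows that, for fixed $\ee \in G(\sM_\sE)$ and fixed $\uu \in M$,
\[
\uu \in P_\ee \qquad \Longleftrightarrow \qquad \ee \in H^0(X,\sE)_\uu \, .
\]
Thus each lattice point $\uu \in P_\ee \cap M$ naturally produces the $T$-eigensection of weight $\uu$ associated to~$\ee$, and conversely every such eigensection arises this way.

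It remains to check that as $\ee$ ranges over $G(\sM_\sE)$ and $\uu$ ranges over $P_\ee \cap M$, the resulting eigensections span every weight space $H^0(X,\sE)_\uu$. This is where the matroid $\sM_\sE$ does the real work: by construction, its ground set is chosen so that for every tuple $(j_i)_{i=1}^n$ the intersection $\bigcap_{i=1}^n E^{\vv_i}(j_i)$ is spanned by those $\ee \in G(\sM_\sE)$ it contains. Applied with $j_i = \langle \uu, \vv_i \rangle$, this says precisely that $H^0(X,\sE)_\uu$ is spanned by the set $\{ \ee \in G(\sM_\sE) : \uu \in P_\ee \}$, which completes the proof.

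I expect the only substantive point to verify is the last one, namely that the ground set $G(\sM_\sE)$ spans every intersection of filtration pieces; this is essentially built into how $\sM_\sE$ is defined in~\S\ref{sub:bundles}, so the argument is really a book-keeping exercise once Klyachko's formula is in hand.
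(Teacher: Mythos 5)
Your proposal is correct and follows essentially the same route as the paper's proof: decompose $H^0(X,\sE)$ into isotypical components, identify $H^0(X,\sE)_\uu$ with $\bigcap_{i=1}^n E^{\vv_i}(\langle \uu,\vv_i\rangle)$ via Klyachko, observe that $\uu \in P_\ee$ is equivalent to $\ee$ lying in this intersection, and invoke the construction of $\sM_\sE$ (Proposition~\ref{pro:matroid}, in \S\ref{sub:polytopes} rather than \S\ref{sub:bundles}) to see that each such intersection is spanned by the ground-set vectors it contains. The only cosmetic slip is writing $\ee \in H^0(X,\sE)_\uu$ where one means $\ee \in V_\uu$, the image of the weight space under evaluation at the identity of the torus.
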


\noindent
Example~\ref{exa:lineBundle} recovers the polytope associated to a toric line
bundle on $X$.  However, when the rank of $\cE$ is greater than $1$,
Example~\ref{exa:tangentBundle} demonstrates that the lattice points in the
polytopes of the parliament need not yield a basis for the space of global
sections.  This highlights the key difference between higher-rank toric vector
bundles and toric line bundles: toric vector bundles depend on both the
combinatorics of the polytopes $P_\ee$ and the properties of the elements
$\ee$ in the ground set of the matroid $\Mat(\cE)$.  For line bundles, we may
overlook the elements indexing the polytope because linear algebra in a
one-dimensional vector space is trivial.  Our criterion for deciding whether a
toric vector bundle is globally generated underscores this distinction.

To outline this criterion, consider a maximal cone $\sigma \in \Sigma$.  The
restriction of the toric vector bundle $\cE$ to the affine open toric variety
$U_\sigma$ splits equivariantly as a direct sum of toric line bundles.  Since
toric line bundles on $U_\sigma$ correspond to lattice points in $M$, we
obtain a multiset $\bm{u}(\sigma) \subset M$ of associated characters for each
maximal cone $\sigma \in \Sigma$; see Section~\ref{sec:bundles}.  With this
notation, we have our second result.

\begin{theorem}
  \label{thm:main1}
  A toric vector bundle is globally generated if and only if, for all maximal
  cones $\sigma \in \Sigma$, the associated characters in $\bm{u}(\sigma)$ are
  vertices of polytopes in the parliament and the elements indexing these
  polytopes form a basis in the matroid $\Mat(\cE)$.
\end{theorem}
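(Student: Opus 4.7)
The plan is to test global generation pointwise at the torus-fixed points $x_\sigma$, and then translate each fibrewise test into the parliament through Proposition~\ref{pro:sections}.

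First I would invoke $T$-equivariance of $\sE$ and completeness of $X$ to reduce global generation to surjectivity of the evaluation map on fibres at every torus-fixed point; these fixed points are precisely the $x_\sigma$ for maximal cones $\sigma \in \Sigma$. Decomposing the fibre of $\sE$ over $x_\sigma$ under the torus action yields $\bigoplus_i k\,\ee_i$, where $\ee_i$ ranges over the basis $\sB_\sigma$ and has weight $u_i(\sigma) \in \bm{u}(\sigma)$ coming from the equivariant splitting $\sE|_{U_\sigma} \cong \bigoplus_i \sO_{U_\sigma}(u_i(\sigma))$. Surjectivity at $x_\sigma$ thus amounts to producing, for every $i$, a $T$-equivariant global section of $\sE$ of weight $u_i(\sigma)$ whose value at $x_\sigma$ has nonzero $\ee_i$-component.

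Next I would use Proposition~\ref{pro:sections} to describe the $T$-equivariant sections via pairs $(\uu,\ee)$ with $\ee \in G(\sM_\sE)$ and $\uu \in P_\ee \cap M$. A direct calculation in the Klyachko picture shows that the associated section $s_{\uu,\ee}$ extends to $x_\sigma$ only when $\uu$ satisfies the tight inequalities indexed by $\sigma(1)$, and in that case its image at $x_\sigma$ lies in the $\ee_i$-summand precisely when $\uu = u_i(\sigma)$ and $\ee$ has a nontrivial $\ee_i$-component in the $\sB_\sigma$ decomposition. Consequently, the needed section exists for every $i$ if and only if $u_i(\sigma) \in P_{\ee_i} \cap M$ for every $\ee_i \in \sB_\sigma$.

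Finally, I would upgrade this lattice-point condition to the vertex condition. By construction, $u_i(\sigma)$ is the unique solution of $\langle \uu, \vv_\rho \rangle = \max\{ j \in \ZZ : \ee_i \in E^{\vv_\rho}(j)\}$ as $\rho$ varies over $\sigma(1)$; since $\sigma$ is a maximal smooth cone, $\{\vv_\rho : \rho \in \sigma(1)\}$ is a $\ZZ$-basis of $\Hom_\ZZ(M,\ZZ)$, so this solution is a lattice point and its active constraints span the ambient space. Hence whenever $u_i(\sigma) \in P_{\ee_i}$, it is automatically a vertex. Combining the three steps gives the theorem. I expect the main obstacle to be step two, where one must carefully identify $s_{\uu,\ee}(x_\sigma)$ with the right component of the Klyachko decomposition and rule out spurious contributions from $\ee \in G(\sM_\sE) \setminus \sB_\sigma$; this reduces to matching the filtrations $E^{\vv_\rho}$ for $\rho \in \sigma(1)$ with the equivariant splitting of $\sE|_{U_\sigma}$, a delicate piece of bookkeeping that is nonetheless standard in Klyachko's theory.
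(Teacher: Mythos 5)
Your proposal is correct and follows essentially the same route as the paper: reduction to surjectivity of the evaluation map at the torus-fixed points $x_\sigma$, identification of the fibre with $\bigoplus_{\ee_i \in \sB_\sigma} \Span(\ee_i)$ via the Klyachko splitting, a local computation showing that an equivariant section hits the $\ee_i$-summand exactly when its character is $u_i(\sigma) \in P_{\ee_i}$, and the observation that the $d$ active constraints coming from the rays of $\sigma$ force $u_i(\sigma)$ to be a vertex. The "delicate bookkeeping" you flag in your second step is precisely what the paper carries out in its equation \eqref{eq:local} together with Lemma~\ref{lem:function}.
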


\noindent
Example~\ref{exa:nonvanishing} demonstrates that global generation is not
simply a property of the individual polytopes in the parliament, and
Example~\ref{exa:nonvanishing2} shows that the higher-cohomology groups of a
globally-generated ample toric vector bundle may be nonzero.

The parliament of polytopes for $\cE$ gives new insights into the projective
bundle $\PP(\cE)$.  This is particularly relevant for the positivity
properties of $\cE$ defined by the corresponding attribute for the
tautological line bundle $\cO_{\PP(\cE)}(1)$.  For instance, we may picture
the restriction of $\cE$ to a torus-invariant curve in $X$ as the normalized
distances between appropriately matched characters associated to $\cE$; see
Section~\ref{sec:basepointfree}.  Hence, Theorem~2.1 in \cite{HMP} allows us
to quickly recognize ample and nef toric vector bundles.  Exploiting our
polyhedral interpretations, Example~\ref{exa:notBPF2} exhibits a toric vector
bundle $\cF$ on $\PP^2$ that is ample but not globally generated, and
Example~\ref{exa:span} exhibits a toric vector bundle $\cH$ on $\PP^2$ that is
ample and globally generated but not very ample.  Better still,
Proposition~\ref{pro:lowRank} and Remark~\ref{rem:lowRank} prove that $\cF$
and $\cH$ have the minimal rank among all toric vector bundles on $\PP^d$ with
the given traits.  Beyond answering Question~7.5 in \cite{HMP}, these examples
reinforce the conventional wisdom that versions of positivity that coincide
for line bundles diverge for higher-rank vector bundles.

The discrete geometry within the parliament of polytopes nevertheless captures
the positivity of jets.  In contrast with the conventional wisdom, several
forms of higher-order positivity are equivalent for toric vector bundles.  A
vector bundle $\cE$ separates $\ell$-jets for $\ell \in \NN$ if, for every
closed point $x \in X$ with maximal ideal $\mathfrak{m}_x \subseteq \cO_X$,
the natural map
$H^0(X, \cE) \to H^0(X, \cE \otimes_{\cO_X} \cO_X / \mathfrak{m}_x^{\ell+1})$
is surjective; see Section~\ref{sec:jets}.  As an enhancement of
Theorem~\ref{thm:main1}, Theorem~\ref{thm:kjet} establishes that a toric
vector bundle $\cE$ separates $\ell$-jets if and only if certain edges in the
polytopes of the parliament have normalized length at least $\ell$.  This
leads to the following equivalences.

\begin{theorem}
  \label{thm:main?}
  A toric vector bundle $\cE$ separates $\ell$-jets if and only if it is
  $\ell$-jet ample.  Moreover, a toric vector bundle $\cE$ separates $1$-jets
  if and only if it is very ample.
\end{theorem}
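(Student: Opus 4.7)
The plan is to deduce both equivalences from the combinatorial edge-length characterization of $k$-jet separation in Theorem~\ref{thm:kjet}, together with a torus-degeneration argument that reduces $k$-jet ampleness to a condition checkable at torus-fixed points.

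The implication from $k$-jet ample to separating $k$-jets is immediate, since the defining surjectivity applied to a single closed point with multiplicity $k+1$ is precisely $k$-jet separation. For the converse, suppose $\sE$ separates $k$-jets, so by Theorem~\ref{thm:kjet} the appropriate edges in the parliament of polytopes have length at least $k$. Fix distinct closed points $x_1, \dotsc, x_r \in X$ and integers $k_1, \dotsc, k_r \geq 0$ with $\sum_i (k_i + 1) = k+1$, and let $Z = \bigsqcup_i \Spec(\sO_{X, x_i} / \mathfrak{m}_{x_i}^{k_i + 1})$. Acting by a generic one-parameter subgroup $\lambda$ of the torus, the family $\{\lambda(t) \cdot Z\}_t$ in the Hilbert scheme has a torus-invariant limit $Z_0$ supported on torus-fixed points. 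Since $H^0(X, \sE)$ has fixed dimension and $H^0(X, \sE \otimes \sO_{\lambda(t) \cdot Z})$ has constant length in $t$, upper semicontinuity of the cokernel rank ensures that surjectivity of evaluation at $Z_0$ forces surjectivity at $Z$. Hence it suffices to verify $k$-jet ampleness for $T$-invariant zero-dimensional subschemes.

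For such $Z_0$, the evaluation map respects the $M$-grading on $H^0(X, \sE)$. The $T$-invariant ideal of $Z_0$ at each torus-fixed point $x_\sigma$ is generated by monomials in local coordinates, so the needed $T$-weight pieces of $H^0(X, \sE \otimes \sO_{Z_0})$ are indexed by lattice points in small polyhedral regions rooted at the vertices $\bm{u}(\sigma)$. The edge-length hypothesis from Theorem~\ref{thm:kjet} guarantees that these regions lie within the relevant polytopes $P_\ee$ for $\ee \in \sB_\sigma$, and every such lattice point, by Proposition~\ref{pro:sections}, provides an equivariant global section of $\sE$ with the prescribed $T$-weight. The main obstacle, and where the parliament structure is essential, is to verify that the sections chosen to realize jet data at one torus-fixed point $x_\sigma$ do not obstruct matching jet data at other torus-fixed points $x_{\sigma'}$; this is handled by the fact that the polytopes $P_\ee$ are separately indexed by $\ee \in G(\sM_\sE)$ and that evaluation at distinct torus-fixed points splits along the equivariant direct-sum decomposition of $\sE$ on each affine toric chart, which is precisely the decoupling encoded by the different basis elements in $\sB_\sigma$ and $\sB_{\sigma'}$.

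The \emph{moreover} assertion then follows from the main equivalence once $1$-jet ampleness is identified with very ampleness: separating pairs of distinct closed points and separating tangent directions via the sections of $\sE$ is exactly the condition for $\sO_{\PP(\sE)}(1)$ to be very ample on $\PP(\sE)$, which is the definition of $\sE$ being very ample.
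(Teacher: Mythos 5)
There are two genuine gaps. First, in your reduction of $k$-jet ampleness to the torus-invariant case, the decoupling between fixed points is not ``handled by the fact that the polytopes $P_\ee$ are separately indexed,'' nor by any formal splitting of the evaluation map: the bases $\sB_{\sigma_i}$ and $\sB_{\sigma_j}$ for distinct maximal cones typically share elements, and a single equivariant section $\ee\otimes\chi^{-\uu}$ can a priori have nonzero jets of the relevant orders at two different fixed points, because the same polytope $P_\ee$ has $\uu_\ell\in\bm{u}(\sigma_i)$ and $\uu_{\ell'}\in\bm{u}(\sigma_j)$ as distinct vertices. The paper's proof of Theorem~\ref{thm:sepIffample} rules this out by convexity: if $\uu$ lay in both $(k_i-1)\cdot\Delta_{\ell,\sigma_i}$ and $(k_j-1)\cdot\Delta_{\ell',\sigma_j}$ inside $P_\ee$, then, since $k_i+k_j-2\leq k-1$ while every edge of $P_\ee$ emanating from $\uu_\ell$ has lattice length at least $k$, these two regions would be disjoint---a contradiction. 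This is exactly where the edge-length hypothesis of Theorem~\ref{thm:kjet} enters for the cross terms, not merely for surjectivity onto each summand, and your proposal replaces it with an assertion that is false as stated. (Your degeneration by a generic one-parameter subgroup is also problematic: such a subgroup flows every point of the dense torus to the \emph{same} fixed point, so the flat limit $Z_0$ is not a disjoint union of fat points at distinct fixed points; the paper instead observes that the non-surjectivity locus in $\prod_{i}X$ is closed and torus-invariant.)

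Second, and more seriously, the ``moreover'' clause cannot be settled by identifying $1$-jet ampleness with very ampleness. Every $1$-jet ample bundle is very ample (Proposition~4.2 in \cite{BDRS}), but the converse fails for general vector bundles on smooth projective varieties (Example~4.3 in \cite{BDRS}), precisely because separating points and tangent directions on $\PP(\sE)$ is strictly weaker than surjecting onto $\sE\otimes_{\sO_X}\sO_X/\mathfrak{m}_x^2$ and onto $\sE_x\oplus\sE_y$. The substantive content of the second assertion is the implication ``very ample $\Rightarrow$ separates $1$-jets,'' which the paper proves in Theorem~\ref{thm:sepIffvery} by blowing up a torus-fixed point $x_{\sigma_0}$, invoking Corollary~1 of \cite{BSS} to see that $\pi^*(\sE)\otimes_{\sO_{X'}}\sO_{X'}(-D_0)$ is globally generated, computing the parliament of polytopes of this twist, and concluding that each $P_{\ee_{\ell,\sigma_0}}$ contains $\uu_\ell+\ww_i$ for all $i$, which is the $k=1$ edge condition of Theorem~\ref{thm:kjet}. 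Your proposal omits this argument entirely, so the proof is incomplete.
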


\noindent
Unlike arbitrary vector bundles on a smooth projective variety, these versions
of positivity coincide for toric vector bundles.  Specializing to line
bundles, we recover the main theorems in \cite{DiRocco}.  We also obtain a
polyhedral characterization for very ampleness; see Corollary~\ref{cor:very}.

\begin{future}
  The introduction of the parliament of polytopes for a toric vector bundle
  suggests some new research projects.  The most straightforward advances
  would provide polyhedral interpretations for other properties of toric
  vector bundles.  For example, we suspect that a toric vector bundle is big
  if and only if some Minkowski sum of the polytopes in the parliament is
  full-dimensional.  For a globally-generated toric vector bundle $\cE$, the
  complete linear series of $\cO_{\PP(\cE)}(1)$ maps the projective bundle
  $\PP(\cE)$ into projective space.  Can one characterize the homogeneous
  equations of the image in terms of combinatorial commutative algebra?  If
  so, then one expects a description of the initial ideals via regular
  triangulations; compare with Section~8 in \cite{Sturmfels}.  Since there
  exists ample, but not globally generated, line bundles on varieties of the
  form $\PP(\cE)$, this class of varieties makes an interesting testing ground
  for Fujita's conjecture; see Conjecture~10.4.1 in \cite{PAG2}.  More
  ambitiously, for an ample toric vector bundle $\cE$, one could even ask for
  an effective polyhedral bound on $m \in \NN$ such that $\Sym^m(\cE)$ is
  globally generated or very ample.  Finally, we wonder if there are natural
  topological hypotheses on the parliament of polytopes which imply that all
  of the higher-cohomology groups vanish.
\end{future}

\begin{conventions}
  Throughout the document, $\NN$ denotes the nonnegative integers and $X$ is a
  smooth complete toric variety over the complex numbers $\CC$.  The linear
  subspace generated by the vectors $\ee_1, \ee_2, \dotsc, \ee_m$ in a
  $\CC$\nobreakdash-vector space is denoted by
  $\Span(\ee_1,\ee_2, \dotsc, \ee_m)$, and the polyhedral cone generated by
  the vectors $\vv_1, \vv_2, \dotsc, \vv_m$ in a $\RR$-vector space is denoted
  by $\pos(\vv_1,\vv_2, \dotsc, \vv_m)$.
\end{conventions}

\begin{acknowledgements}
  We thank Alex Fink, Milena Hering, Nathan Ilten, Diane Maclagan, Bernt Ivar
  Utst{\o}l N{\o}dland, Sam Payne, Vic Reiner, Mike Roth, and Frank Sottile
  for helpful conversations.  We especially thank an anonymous referee for
  wonderfully constructive feedback and for suggesting Example~\ref{exa:Ref}.
  The first author was partially supported by the Vetenskapsr\aa det grants
  NT:2010\nobreakdash-5563 and NT:2014\nobreakdash-4736, the second was
  partially supported by the G\"{o}ran Gustafsson Stiftelse, and the third was
  partially supported by NSERC.
\end{acknowledgements}

\section{Background on Toric Vector Bundles}
\label{sec:bundles}

\noindent
In this section, we collect the needed definitions and notation for toric
varieties and vector bundles.

Let $X$ be a smooth complete $d$-dimensional toric variety, over $\CC$,
determined by the strongly convex rational polyhedral fan $\Sigma$ in
$N \otimes_\ZZ \RR \cong \RR^d$, where $N$ is a lattice of rank $d$.  The dual
lattice is $M := \Hom_{\ZZ}(N,\ZZ)$, and the dense algebraic torus acting on
$X$ is $T := \Spec \CC[M]$.  For $\sigma \in \Sigma$, the corresponding affine
toric variety is $U_\sigma := \Spec \CC[\sigma^\vee \cap M]$, where
$\sigma^\vee$ denotes the dual cone. The $j$-dimensional cones of $\Sigma$
form the set $\Sigma(j)$.  For each maximal cone $\sigma \in \Sigma(d)$, the
corresponding $T$-fixed point is $x_\sigma \in X$.  We order the
$1$-dimensional cones $\Sigma(1)$ (also known as rays) and, for
$1 \leq i \leq n$, we write $\vv_i \in N$ for the unique minimal generator of
the $i$-th ray.  The $i$-th ray also corresponds to the irreducible
$T$-invariant divisor $D_i$ on $X$, and the divisors $D_1, D_2, \dotsc, D_n$
generate the group $\Div_T(X) \cong \ZZ^n$ of $T$-invariant divisors.  Since
$X$ is complete, there is a short exact sequence
\[
  0 \longrightarrow M \xrightarrow{\; \div \;} \Div_T(X) \longrightarrow
\Pic(X) \longrightarrow 0
\] 
where
$\div \uu := \langle \uu, \vv_1 \rangle D_1 + \langle \uu, \vv_2 \rangle D_2 +
\dotsb + \langle \uu, \vv_n \rangle D_n$ and the second map is the projection
from the group of divisors to the Picard group.  The invertible sheaf or line
bundle associated to a divisor $D \in \Div_T(X)$ is denoted by $\cO_X(D)$.
For more information on toric varieties, see \cite{CLS} or \cite{Fulton}.

A \define{toric vector bundle} is a locally-free $\cO_X$-module $\cE$ of
finite rank $r$ equipped with a $T$-action that is compatible with the
$T$-action on $X$. In other words, there exists a $T$-action on the variety
$\variety(\cE) := \Spec ( \Sym \cE)$ such that the projection map
$\pi \colon \variety(\cE) \to X$ is $T$-equivariant and $T$ acts linearly on
the fibres.  For all $\sigma \in \Sigma$, there is also an induced $T$-action
on the $\CC$-vector spaces of sections $H^0(U_\sigma,\cE)$, where $U_\sigma$
is the corresponding affine toric variety.  Given a lattice point $\uu \in M$,
the trivial line bundle $\cO_X( \div \uu )$ has a canonical $T$-equivariant
structure.  Explicitly, for all $\sigma \in \Sigma$, we have
\[
  H^0 \bigl(U_\sigma, \cO_X( \div \uu ) \bigr) = \bigoplus_{\uu' \in
    \sigma^\vee \cap M} \CC \cdot \chi^{\uu' - \uu} \subset T \, ,
\] 
where $\chi^{\uu'}, \chi^{\uu}$ are the characters associated to the lattice
points $\uu', \uu \in M$; the identity in this semigroup is $\chi^{-\uu}$.  As
in \cite{HMP}, we follow the standard convention in invariant theory for the
action of the group on the ring of functions, even though the opposite sign
convention is more common in the toric literature.

Every toric line bundle on the affine toric variety $U_\sigma$ is
$T$-equivariantly isomorphic to a line bundle $\cO_X( \div \uu )|_{U_\sigma}$,
where the class $\overline{\uu}$ of the lattice point $\uu$ in
$M_\sigma := M / (\sigma^{\perp} \cap M)$ is uniquely determined.  In
addition, any toric vector bundle on an affine toric variety splits
$T$-equivariantly as a direct sum of toric line bundles whose underlying line
bundles are trivial; see Proposition~2.2 in \cite{Payne}.  Hence, for all
$\sigma \in \Sigma$, there is a unique multiset
$\bm{u}(\sigma) \subset M_\sigma$ such that
$\cE|_{U_\sigma} \cong \bigoplus_{\overline{\uu} \in \bm{u}(\sigma)} \cO_X(
\div \uu ) |_{U_\sigma}$, where $\uu \in M$ is any lift of $\overline{\uu}$.
If $\sigma$ is a maximal cone, then the multiset $\bm{u}(\sigma) \subset M$ is
uniquely determined by the toric vector bundle $\cE$ and the $d$-dimensional
cone $\sigma$.  We call the multisets $\bm{u}(\sigma)$, for all
$\sigma \in \Sigma(d)$, the \emph{associated characters} of the toric vector
bundle $\cE$.

Toric vector bundles are classified in Theorem~0.1.1 of \cite{Kly} by
canonical filtrations.  To summarize this classification, let $E$ be the fibre
of $\cE$ over the identity of the torus $T$, so $E$ is a $\CC$-vector space
isomorphic to $\CC^r$.  The category of toric vector bundles on $X$ is
naturally equivalent to the category of finite-dimensional $\CC$-vector spaces
$E$ with separated exhaustive decreasing filtrations
$\{ E^{i}(j) \}_{j \in \ZZ}$, for all $1 \leq i \leq n$, that satisfy the
compatibility condition: \setcounter{equation}{0}
\begin{equation}
  \label{eq:compatibility}
  \tag{$\text{CC}$}
  \text{
    \begin{tabular}{p{11cm}}
      For each maximal cone $\sigma \in \Sigma(d)$, there exists a decomposition
      $E = \displaystyle\bigoplus\nolimits_{\uu \in \bm{u}(\sigma)} L_{\uu}$ such
      that $E^{i}(j) = \displaystyle\sum\nolimits_{\langle \uu, \vv_i
      \rangle \geq j} L_\uu$.
    \end{tabular}}
  \setcounter{equation}{1}
\end{equation}
This compatibility condition is equivalent to the $T$-equivariant splitting
into a direct sum of toric line bundles on the affine open toric variety
$U_\sigma$, for all $\sigma \in \Sigma(d)$; see Theorem~1.3.2 in \cite{Kly2}.
Indirectly, the decreasing filtrations provide the gluing data needed to
assemble these direct sums into a toric vector bundle.  The filtrations being
separated and exhaustive, for each $1 \leq i \leq n$, means that
$E^{i}(j) = 0$ for all $j \gg 0$ and $E^{i}(j) = E \cong \CC^r$ for all
$j \ll 0$, so each filtration contains only finitely many distinct linear
subspaces.  Hence, for a fixed $i$, we may conveniently describe the
filtration $\{ E^{i}(j) \}_{j \in \ZZ}$ via a labelled basis
$\ee_1, \ee_2, \dotsc, \ee_r$ for $E \cong \CC^r$, where each vector
$\ee_k \in E$ is labelled by an integer and the linear subspace $E^{i}(j)$ is
simply the span of the basis vectors with labels greater than or equal to $j$.
For a self-contained exposition of this classification, we recommend
Subsection~2.3 in \cite{Payne}; Subsection~2.4 in \cite{Payne} also provides a
brief historical summary.

Given a toric vector bundle $\cE$, the filtrations
$\{ E^{i}(j) \}_{j \in \ZZ}$ have a couple different geometric
interpretations.  For all cones $\sigma \in \Sigma$ and all lattice points
$\uu \in M$, evaluating sections at the identity of the torus $T$ gives an
injective map $H^0(U_\sigma, \cE)_\uu \into E$.  The image of this map is the
linear subspace $E_{\uu}^\sigma \subseteq E$.  Following Subsection~4.2 in
\cite{Payne2}, we define a linear subspace $E^{\vv}(j) \subseteq E$ for all
$\vv \in N$ and all $j \in \ZZ$.  Since $X$ is complete, there exists a unique
cone $\sigma \in \Sigma$ containing the lattice point $\vv$ in its relative
interior.  Set
$E^{\vv}(j) := \sum_{\langle \uu, \vv \rangle \geq j} E_{\uu}^\sigma$.  For
any lattice point $\vv \in N$, the family of linear subspaces
$\{ E^{\vv}(j) \}_{j \in \ZZ}$ give a separated exhaustive decreasing
filtration of $E$.  When the lattice point $\vv$ equals $\vv_i$ for some
$1 \leq i \leq n$, we obtain the filtration $\{ E^{i}(j) \}_{j \in \ZZ}$.

For the second interpretation of the filtrations, consider a cone
$\sigma \in \Sigma$ and suppose that we have
$\cE|_{U_\sigma} \cong \bigoplus_{\overline{\uu} \in \bm{u}(\sigma)} \cO_X(
\div \uu ) |_{U_\sigma}$.  If the linear subspace $L_{\uu} \subseteq E$ is the
fibre of $\cO_X( \div \uu )$ over the identity of the torus $T$, then we
obtain a decomposition
$E = \bigoplus_{\overline{\uu} \in \bm{u}(\sigma)} L_{\uu}$.  Hence, the
linear subspace $E_{\uu'}^\sigma$ is spanned by the linear subspaces $L_{\uu}$
for which $\uu - \uu' \in \sigma^\vee$ and
$E^{\vv}(j) = \bigoplus_{\langle \uu, \vv \rangle \geq j} L_{\uu}$.  For each
maximal cone $\sigma \in \Sigma$, there exists a subset
$\underline{\bm{u}}(\sigma) \subset M$ and a decomposition
$E = \bigoplus_{\uu \in \underline{\bm{u}}(\sigma)} E_{\uu}$ such that, for
all $\vv \in \sigma$ and for all $j \in \ZZ$, we have
$E^{\vv}(j) = \bigoplus_{\langle \uu, \vv \rangle \geq j} E_{\uu}$.  It
follows that $E_{\uu} = \bigoplus_{\uu \in \bm{u}(\sigma)} L_{\uu}$, so
$\dim E_{\uu}$ equals the multiplicity of $\uu$ in the multiset
$\bm{u}(\sigma)$ and $\underline{\bm{u}}(\sigma)$ is the underlying set of
elements in $\bm{u}(\sigma)$.

\section{Global Sections and Lattice Polytopes}
\label{sec:globalSections}

\noindent
This section introduces explicit $T$-equivariant generators for the global
sections of the toric vector bundle that correspond to the lattice points in a
collection of polytopes.  Each toric line bundle $\cL$ on $X$ corresponds to a
rational convex polytope in $M \otimes_\ZZ \RR$.  We generalize this
correspondence by associating a finite collection of convex polytopes to a
toric vector bundle $\cE$.  The polytopes in this collection are indexed by
the elements in the ground set of a matroid associated to $\cE$.

To describe this matroid, we first observe that the toric vector bundle $\cE$
determines the finite poset $\Poset(\cE)$, consisting of all the linear
subspaces $V := \bigcap_{i=1}^n E^{i}(j_i) \subseteq E$, where
$(j_1, j_2, \dotsc, j_n) \in \ZZ^n$, ordered by inclusion.  Since the
filtrations $\{ E^{i}(j) \}_{j \in \ZZ}$ are separated, exhaustive, and
decreasing, we see that $0 \in \Poset(\cE)$, $E \in \Poset(\cE)$, and
$\Poset(\cE)$ is closed under intersection.  Hence, the pair
$\bigl( \Poset(\cE), \cap \bigr)$ forms a meet-semilattice.  The next result
shows that $\Poset(\cE)$ embeds into the lattice of flats for a distinguished
representable matroid.

\begin{proposition}
  \label{pro:matroid}
  For a toric vector bundle $\cE$, there exists a unique matroid $\Mat(\cE)$,
  representable over $\CC$, such that
  \begin{enumerate}[\bfseries\upshape (M1)]
  \item the poset $\Poset(\cE)$ is isomorphic to a meet-subsemilattice in the
    lattice of flats,
  \item among all matroids satisfying \emph{(M1)}, the number of elements in the
    ground set is minimal, and
  \item among all matroids satisfying \emph{(M1)} and \emph{(M2)}, the number
    of circuits is minimal.
  \end{enumerate}
\end{proposition}

\noindent
In the language of linear subspace arrangements (and ordering the subspaces by
reversed inclusion), Proposition~\ref{pro:matroid} is equivalent to
Theorem~I.4.9 in \cite{Ziegler}.

\begin{proof}
  We verify that Algorithm~\ref{alg:matroid} returns a representable matroid
  $\Mat$ with the desired conditions. 
  \begin{table}[ht]
    \begin{algorithm}[Construction of the representable matroid associated to a
      toric vector bundle] 
      \label{alg:matroid}
      $\;$
    
      \begin{tabbing}
        \= Output: \= \kill
        \> Input: \> The poset $\Poset(\cE)$ of linear subspaces
        associated to the toric vector bundle $\cE$. \\
        \> Output: \> The canonical matroid $\Mat(\cE)$ associated to
        $\cE$. \\[0.35em]
        \= W \= W \= W \= W \= \kill
        \> Set $r$ to be the dimension of the largest linear subspace $E$ in
        $\Poset(\cE)$;\\
        \> Initialize $G$ to be a set consisting of a basis vector for each
        one-dimensional subspace in $\Poset(\cE)$; \\
        \> For each integer $k$ from $2$ to $r$ do \\
        \> \> For each $k$-dimensional linear subspace $V$ in $\Poset(\cE)$ do\\
        \> \> \> Set $G'$ to be the subset of elements in $G$ that lie in
        $V$;\\
        \> \> \> If the linear subspace $\Span(G')$ is a proper subspace in
        $V$ then\\
        \> \> \> \>  Append to $G$ a basis for a complementary subspace to
        $\Span(G')$ in $V$; \\
        \> Return the linear matroid defined by the vectors in $G$.
      \end{tabbing}
    \end{algorithm}
  \end{table}
  By construction, each linear subspace $V$ in $\Poset(\cE)$ is generated by a
  subset of vectors in the ground set of the matroid $\Mat$.  The subset of
  the ground set consisting of all elements contained in $V$ is the flat $F_V$
  in $\Mat$ corresponding to $V$.  It follows that $\Span(F_V) = V$,
  $\rank(F_V) = \dim(V)$, and the induced injective map from the poset
  $\Poset(\cE)$ into lattice of flats for $\Mat$ is compatible with
  intersections.  Thus, the matroid $\Mat$ satisfies the condition~(M1).

  For any matroid, the lattice of flats is relatively complemented; see
  Proposition~3.4.4 in \cite{matroid}.  It follows that, for any linear
  subspace $V$ in $\Poset(\cE)$ and any matroid satisfying condition~(M1),
  there exists a flat $F'$ such that the join of $F_V$ and $F'$ is $F_E$ and
  the meet of $F_V$ and $F'$ is $F_{\{0\}}$.  By iterating from the smallest
  to the largest linear subspaces in $\Poset(\cE)$, the
  Algorithm~\ref{alg:matroid} finds a minimal set of complementary subspaces
  for $\Poset(\cE)$.  Adjoining these to $\Poset(\cE)$, we obtain a new
  meet-semilattice $L'$ such that the complementary subspaces are minimal
  among the nonzero subspaces, and every linear subspace is generated by some
  collection of minimal nonzero subspaces.  Using the terminology from
  Section~3.4 in \cite{matroid}, we see that the atoms in $L'$ are the
  one-dimension linear subspaces in $\Poset(\cE)$ together with the adjoined
  complementary subspaces.  Moreover, $L'$ is the minimal atomistic
  meet-semilattice containing $\Poset(\cE)$.

  Finally, we claim that the matroid $\Mat$ is the free expansion of $L'$; see
  Proposition~10.2.3 in \cite{matroid}.  By construction, the ground set of
  $\Mat$ consists of a basis for each atom in $L'$, so the number of elements
  in the ground set of $\Mat$ equals the number of elements in the ground set
  of the free expansion of $L'$.  Moreover, the conditional statement in
  Algorithm~\ref{alg:matroid} implies that a flat $D$ in the matroid $\Mat$ is
  dependent if and only if there exists a linear subspace $W \in L'$ such that
  $|D \cap F_W| > \dim(W)$.  We conclude that $\Mat$ is the free expansion of
  $L'$.  Therefore, Proposition~10.2.2 and Proposition~10.2.6 in
  \cite{matroid} establish that the matroid $\Mat$ satisfies conditions (M2)
  and (M3) respectively.
\end{proof}

\begin{remark}  
  \label{rem:split}
  Since $E \in \Poset(\cE)$, Algorithm~\ref{alg:matroid} shows that the number
  of elements in the ground set of the matroid $\Mat(\cE)$ is at least the
  rank $r$ of $\cE$.  To have equality, there must be a basis for $E$ such
  that every linear subspace in $\Poset(\cE)$ is a direct sum of coordinate
  subspaces. Hence, the number of elements in the ground set of the matroid
  $\Mat(\cE)$ equals $r$ if and only if the toric vector bundle $\cE$ splits
  $T$-equivariantly into a direct sum of toric line bundles.
\end{remark}

For each maximal cone $\sigma \in \Sigma(d)$, the compatibility condition
\eqref{eq:compatibility} is equivalent to saying that the subposet of
$\Poset(\cE)$ consisting of the linear subspaces
$\bigcap_{\vv_i \in \sigma} E^{i}(j_i)$, where $j_i \in \ZZ$, is a
distributive lattice; see Remark~2.2.2 in \cite{Kly}.  Equivalently, the
matroid $\Mat(\cE)$ contains a \define{compatible basis $\sB_{\sigma}$} such
that each component $E^{i}(j)$, for $\vv_i \in \sigma$ and $j \in \ZZ$, is a
direct sum of the corresponding coordinate subspaces.
Example~\ref{exa:nonvanishing} demonstrates that, for a given maximal cone
$\sigma$, there may be more that basis in $\Mat(\cE)$ with this property.

\begin{remark}
  \label{rem:intrinsic}
  For an element $\ee$ in the ground set of the matroid $\Mat(\cE)$ and a
  linear subspace $V \in \Poset(\cE)$, the relation $\ee \in V$ depends only
  on the matroid $\Mat(\cE)$ and not on the choice of a representation for
  $\Mat(\cE)$.  Nevertheless, Algorithm~\ref{alg:matroid} does produce a
  particular representation for $\Mat(\cE)$.  This is analogous to a minimal
  free presentation for a finitely generated graded module over a polynomial
  ring: the ranks of the free modules are intrinsic invariants, but the matrix
  representing the map depends on the choice of bases; compare with Section~1B
  in \cite{Eis}.
\end{remark}

For each element $\ee$ in the ground set of the matroid $\Mat(\cE)$, the
associated convex polytope is
\[
P_{\ee} := \bigl\{ \uu \in M \otimes_\ZZ \RR :
\text{$\langle \uu, \vv_i \rangle \leq \max\bigl( j \in \ZZ : \ee \in
  E^{i}(j) \bigr)$ for all $1 \leq i \leq n$} \bigr\} \, .
\]  
Using a traditional term of venery (namely, the collective noun for owls), we
call the collection of all such polytopes $P_\ee$ the \define{parliament of
  polytopes} for the toric vector bundle $\cE$.  The number of polytopes in
the parliament for $\cE$ is at least the rank of $\cE$ and equals the rank of
$\cE$ precisely when $\cE$ splits into a direct sum of toric line bundles; see
Remark~\ref{rem:split}.

Extending the classic theorem~\cite{CLS}*{Theorem~4.3.3} for line bundles on a
toric variety, we have the following interpretation for the lattice points in
a parliament of polytopes.

\begin{proposition1}
  The lattice points in the polytopes of the parliament for $\cE$ correspond
  to the $T$-equivariant generators for the space of global sections of $\cE:$
  \[
    H^0(X, \cE) \cong \sum_{\ee} \Span ( \ee \otimes \chi^{-\uu} : \uu \in
    P_{\ee} \cap M ) \subset E \otimes_\CC T \, ,
  \]
  where the sum is over all elements $\ee$ in the ground set of the matroid
  $\Mat(\cE)$.
\end{proposition1}

\begin{proof}[Proof of Proposition~1.1]
  The $T$-action on the space of global sections yields a decomposition into
  isotypical components $H^0(X, \cE)_\uu$, where $\uu \in M$.  The regular
  $T$-eigenfunction $\chi^{- \uu}$ is an element of $H^0(X, \cE)_\uu$ and we
  have $H^0(X, \cE) = \bigoplus_{\uu \in M} H^0(X, \cE)_\uu$.  Since $X$ is
  complete, at most finitely many of the isotypical components are nonzero.
  Following Corollary~4.1.3 in \cite{Kly}, evaluation at the identity of the
  torus $T$ gives a canonical isomorphism
  \[
  H^0(X, \cE)_\uu = \bigcap_{\sigma \in \Sigma(d)} H^0(U_\sigma, \cE)_\uu
  \xrightarrow{\;\; \cong \;\;} \bigcap_{\sigma \in \Sigma(d)} E_\uu^\sigma 
  = \bigcap_{i=1}^{n}
  E^{i}(\langle \uu , \vv_i \rangle) \, .
  \]
  Since the linear subspace
  $V_{\uu} := \bigcap_{i=1}^{n} E^{i}(\langle \uu , \vv_i \rangle)$ belongs to
  the poset $\Poset(\cE)$, Proposition~\ref{pro:matroid} shows that there
  exists a flat $F$ in the matroid $\Mat(\cE)$ such that $\Span(F) = V_{\uu}$.
  Hence, we obtain the isomorphism
  $H^0(X,\cE)_{\uu} \cong \sum\limits_{\ee} \Span( \ee \otimes \chi^{-\uu} :
  \ee \in V_{\uu})$.  Because we have
  \begin{alignat*}{4}
    \ee &\in V_\uu &\quad &\Longleftrightarrow \quad &\ee &\in
    E^{i}(\langle \uu, \vv_i \rangle) && \text{for all
      $1 \leq i \leq n$} \\
    & &\quad &\Longleftrightarrow \quad &\langle \uu, \vv_i \rangle &\leq \max
    \bigl( j \in \ZZ : \ee \in E^{i}(j) \bigr) &\quad& \text{for all
      $1 \leq i \leq n$}  \\
    & &\quad &\Longleftrightarrow \quad & \uu &\in P_{\ee} \cap M \, , 
  \end{alignat*}
  we conclude that
  $H^0(X,\cE)_{\uu} \cong \sum\limits_{\ee} \Span( \ee \otimes \chi^{-\uu} :
  \uu \in P_{\ee} \cap M)$.
\end{proof}

As expected, we recover the description for the global sections of a line
bundle.

\begin{example}
  \label{exa:lineBundle}
  Every line bundle $\cL$ on a smooth toric variety $X$ equals $\cO_X(D)$ for
  some $T$-invariant divisor $D = a_1 D_1 + a_2 D_2 + \dotsb + a_n D_n$.
  Theorem~6.1.7 in \cite{CLS} establishes that the Cartier divisor $D$ is
  determined by a collection $\{ \uu_\sigma \in M : \sigma \in \Sigma(d) \}$,
  so we obtain $\bm{u}(\sigma) = \{ \uu_\sigma \}$ for all
  $\sigma \in \Sigma(d)$.  The associated continuous piecewise linear function
  $\varphi_D : N_\RR \to \RR$ satisfies $\varphi_D(\vv_i) = - a_i$ and
  $\varphi_D(\vv) = \langle \uu_\sigma, \vv \rangle $ for all
  $\vv \in \sigma$.  Following Subsection~2.3.1 in \cite{Kly}, the decreasing
  filtrations corresponding to $\cL$ are
  \[
  E^{i}(j) := 
  \left\{
    \renewcommand{\arraystretch}{0.9}
    \renewcommand{\arraycolsep}{1pt}
    \begin{array}{lcc}
      \CC & \;\; \text{if} \;\; & j \leq a_i \\
      0 & \text{if} & j > a_i
    \end{array}
  \right.
  \qquad \text{for all $1 \leq i \leq n$.}
  \] 
  If $\ee$ is any nonzero vector in $E = \CC$, then the ground set of the
  matroid $\Mat(\cL)$ is $\{ \ee \}$ and the unique polytope in the parliament
  is
  $P_\ee = \{ \uu \in M \otimes_\ZZ \RR : \langle \uu, \vv_i \rangle \leq a_i
  \}$.  It follows that $E_{\uu_\sigma} = E = \CC$ for all
  $\sigma \in \Sigma(d)$, so $H^0(X,\cL)_\uu = \CC$ when
  $\langle \uu, \vv_i \rangle \leq a_i$ for all $1 \leq i \leq n$ and
  $H^0(X,\cL)_\uu = 0$ otherwise.  Therefore, we have
  $H^0(X,\cL) = \bigoplus_{\uu \in P_\ee \cap M} \Span (\ee \otimes \chi^{-
    \uu})$.  Be aware that we use the opposite sign convention when compared
  to either Section~6.1 in \cite{CLS} or Section~3.4 in \cite{Fulton}.  \hfill
  $\Diamond$
\end{example}

The polytopes in the parliament also have an attractive reinterpretation as
toric line bundles.

\begin{remark}
  \label{rem:gens}
  For each flat $F$ in the matroid $\Mat(\cE)$, the associated $T$-invariant
  divisor on $X$ is defined to be
  $D_F := a_1(F) \, D_1 + a_2(F) \, D_2 + \dotsb + a_n(F) \, D_n$, where
  $a_i(F) := \max\{ j \in \ZZ : \text{$\Span(F) \subseteq E^i(j)$} \}$.  In
  particular, each flat $F$ gives rise to an toric line bundle $\cO_{X}(D_F)$.
  When a flat is defined by a single element $\ee$ in the ground set of
  $\Mat(\cE)$, the polytope corresponding to $\cO_{X}(D_{\ee})$ is simply the
  polytope $P_\ee$ from the parliament for $\cE$.  By construction, there is a
  natural map from the filtrations of the toric vector bundle
  $\bigoplus_\ee \cO_{X}(D_{ \ee })$ onto the filtrations for the toric vector
  bundle $\cE$.  Hence, the equivalence of categories yields a canonical
  surjective homomorphism
  \[
    \eta \colon \bigoplus_{\ee} \cO_X(D_{\ee}) \to \cE \, ,
  \] 
  where the sum is over all elements $\ee$ in the ground set of the matroid
  $\Mat(\cE)$.  Rephrasing Proposition~\ref{pro:sections}, we see that the map
  $\eta$ induces a surjection on global sections.
\end{remark}

Our second example shows that the ground set of the matroid $\Mat(\cE)$ may be
strictly larger than the union $\bigcup_{\sigma \in \Sigma(d)} \sB_\sigma$ of
the bases for $E$ that split the filtrations over the maximal cones.

\begin{example}
  \label{exa:Ref} 
  To describe a toric vector bundle $\cE$ of rank $3$ on $\PP^1 \times \PP^1$,
  we first specify the fan: the unique minimal lattice points generating the
  rays are $\vv_1 = (1,0)$, $\vv_2 = (0,1)$, $\vv_3 = (-1,0)$,
  $\vv_4 = (0,-1)$ and the maximal cones are
  $\sigma_{1,2} = \pos(\vv_1, \vv_2)$, $\sigma_{2,3} = \pos(\vv_2, \vv_3)$,
  $\sigma_{3,4} = \pos(\vv_3, \vv_4)$, $\sigma_{1,4} = \pos(\vv_1, \vv_4)$.
  If $\ee_1, \ee_2, \ee_3$ denotes the standard basis of $E = \CC^3$, then the
  decreasing filtrations defining $\cE$ are
  \begin{xalignat*}{2}
    E^{1}(j) & = 
    \left\{
      \renewcommand{\arraystretch}{0.9}
      \renewcommand{\arraycolsep}{1pt}
      \begin{array}{p{70pt}crcl}
        $E$ & \;\; \text{if} \;\; & & j & \leq -1\\
        $\Span ( \ee_1, \ee_2 )$ & \text{if} & -1 < & j & \leq 0 \\
        $\Span ( \ee_1 +\ee_2 )$ & \text{if} & 0 < & j & \leq 1 \\
        $0$ & \text{if} & 1 < & j & \, ,
      \end{array}
    \right. &
     E^{3}(j) & = 
    \left\{
      \renewcommand{\arraystretch}{0.9}
      \renewcommand{\arraycolsep}{1pt}
      \begin{array}{p{70pt}crcl}
        $E$ & \;\; \text{if} \;\; & & j & \leq -1 \\
        $\Span ( \ee_1, \ee_3 )$ & \text{if} & -1 < & j & \leq 0 \\
        $\Span ( \ee_1+ \ee_3 )$ & \text{if} & 0 < & j & \leq 1 \\
        $0$ & \text{if} & 1 < & j & \, ,
      \end{array}    \right. \\
     E^{2}(j) & = 
    \left\{
      \renewcommand{\arraystretch}{0.9}
      \renewcommand{\arraycolsep}{1pt}
      \begin{array}{p{70pt}crcl}
        $E$ & \;\; \text{if} \;\; & & j & \leq 0 \\
        $\Span ( \ee_2, \ee_3 )$ & \text{if} & 0 < & j & \leq 1 \\
        $\Span ( \ee_2 )$ & \text{if} & \phantom{-}1 < & j & \leq 2 \\
        $0$ & \text{if} & 2 < & j  & \, ,
      \end{array}      \right. &
       E^{4}(j) & = 
    \left\{
      \renewcommand{\arraystretch}{0.9}
      \renewcommand{\arraycolsep}{1pt}
      \begin{array}{p{70pt}crcl}
        $E$ & \;\; \text{if} \;\; & & j & \leq 0 \\
        $\Span ( \ee_2, \ee_3 )$ & \text{if} & 0 < & j & \leq 1 \\
        $\Span ( \ee_2 )$ & \text{if} & \phantom{-}1 < & j & \leq 2 \\
        $0$ & \text{if} & 2 < & j & \, .
      \end{array} \right.
  \end{xalignat*}
  It follows that the ground set of matroid $\Mat(\cE)$ is
  $\{\ee_1, \ee_1+ \ee_2, \ee_1 + \ee_3, \ee_2, \ee_3 \}$.
  Figure~\ref{fig:latticeforE} represents its lattice of flats; the flats
  \begin{figure}[t]
    \begin{tikzpicture}[x=1.35cm, y=0.5cm, line width=1.25pt]
      \draw (0,0) -- (-4,1.5);
      \draw (0,0) -- (-2,1.5);
      \draw (0,0) -- (0,1.5);
      \draw (0,0) -- (2,1.5);
      \draw (0,0) -- (4,1.5);
      \draw (0,6.5) -- (-3,4.5);
      \draw (0,6.5) -- (-1,4.5);
      \draw (0,6.5) -- (1,4.5);
      \draw (0,6.5) -- (3,4.5);
      \draw (0,6.5) -- (5,4.5);
      \draw (-4,1.5) -- (-5,4.5);
      \draw (-4,1.5) -- (-3,4.5);
      \draw (-2,1.5) -- (-5,4.5);
      \draw (-2,1.5) -- (-1,4.5);
      \draw (-2,1.5) -- (1,4.5);
      \draw (0,1.5) -- (-3,4.5);
      \draw (0,1.5) -- (-1,4.5);
      \draw (0,1.5) -- (3,4.5);
      \draw (2,1.5) -- (-5,4.5);
      \draw (2,1.5) -- (3,4.5);
      \draw (2,1.5) -- (5,4.5);
      \draw (4,1.5) -- (-3,4.5);
      \draw (4,1.5) -- (1,4.5);
      \draw (4,1.5) -- (5,4.5);
      \draw (0,6.5) -- (-5,4.5);
      \node[fill=white, ellipse, inner sep=0.5pt] () at (0,0) {\tiny $0$};
      \node[fill=white, ellipse, inner sep=0.5pt] () at (-4,1.5) {\tiny
        $\Span(\ee_1)$};
      \node[fill=white, ellipse, inner sep=0.5pt] () at (-2,1.5) {\tiny
        $\Span(\ee_1+\ee_2)$};
      \node[fill=white, ellipse, inner sep=0.5pt] () at (0,1.5) {\tiny
        $\Span(\ee_1+\ee_3)$};
      \node[fill=white, ellipse, inner sep=0.5pt] () at (2,1.5) {\tiny
        $\Span(\ee_2)$};
      \node[fill=white, ellipse, inner sep=0.5pt] () at (4,1.5) {\tiny
        $\Span(\ee_3)$};
      \node[fill=white, ellipse, inner sep=0.5pt] () at (-5,4.5) {\tiny
        $\Span(\ee_1, \ee_1 + \ee_2, \ee_2)$};
      \node[fill=white, rectangle, inner sep=0.5pt] () at (-1,4.5) {\tiny
        \textcolor{blue}{$\Span(\ee_1 + \ee_2, \ee_1 + \ee_3)$}};
      \node[fill=white, ellipse, inner sep=0.5pt] () at (1,4.5) {\tiny
        \textcolor{blue}{$\Span(\ee_1+\ee_2, \ee_3)$}};
      \node[fill=white, ellipse, inner sep=0.5pt] () at (3,4.5) {\tiny
        \textcolor{blue}{$\Span(\ee_1 +\ee_3, \ee_2)$}};
      \node[fill=white, ellipse, inner sep=0.5pt] () at (5,4.5) {\tiny
        $\Span(\ee_2, \ee_3)$};
     \node[fill=white, rectangle, inner sep=0.8pt] () at (-3,4.5) {\tiny
        $\Span(\ee_1, \ee_1 + \ee_3, \ee_3)$};
      \node[fill=white, ellipse, inner sep=0.5pt] () at (0,6.5) {\tiny $E
        \cong \Span(\ee_1, \ee_1+ \ee_2, \ee_1 + \ee_3, \ee_2, \ee_3)$};
    \end{tikzpicture}
    \caption{Hasse diagram for the lattice of flats}
    \label{fig:latticeforE}      
  \end{figure}
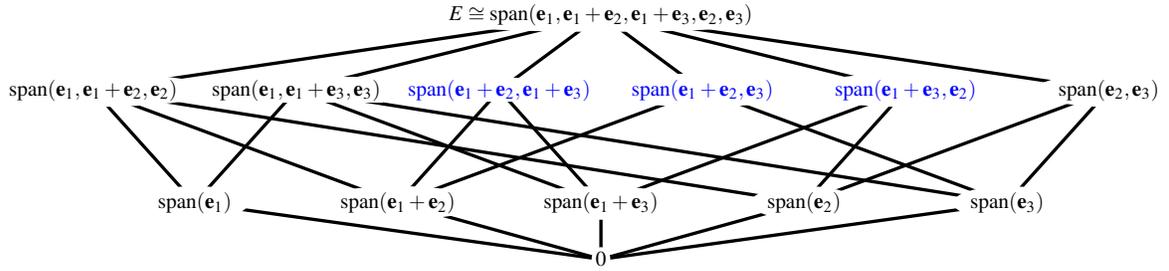
  appearing in $\Mat(\cE)$ but not in $\Poset(\cE)$ are shaded.  
  On each maximal cone, the associated characters and the unique choice of
  compatible basis are
  \begin{xalignat*}{2}
    \bm{u}(\sigma_{1,2}) &= \{ (1, 0), (0,2), (-1, 1) \} \, , &
    \sB_{\sigma_{1,2}} &= \{ \ee_1 + \ee_2, \ee_2, \ee_3 \} \, ,\\
    \bm{u}(\sigma_{2,3}) &= \{ (-1, 0), (1,2), (0, 1) \} \, , &
    \sB_{\sigma_{2,3}} &= \{ \ee_1 + \ee_3, \ee_2, \ee_3 \} \, ,\\
    \bm{u}(\sigma_{3,4}) &= \{ (-1, 0), (1,-2), (0, -1) \} \, ,&
    \sB_{\sigma_{3,4}} &= \{ \ee_1 + \ee_3, \ee_2, \ee_3 \} \, ,\\
    \bm{u}(\sigma_{1,4}) &= \{ (0, -2), (1,0), (-1, -1) \} \, ,&
    \sB_{\sigma_{1,4}} &= \{ \ee_2, \ee_1 + \ee_2, \ee_3 \} \, .
  \end{xalignat*}
  Hence, the parliament for $\cE$ consists of the following five convex
  polytopes: $P_{\ee_1} = \conv \bigl( (0,0) \bigr)$,
  $P_{\ee_1 + \ee_2} = \conv \bigl( (1,0) \bigr)$,
  $P_{\ee_1 + \ee_3} = \conv \bigl( (-1,0) \bigr)$, $P_{\ee_2} = \varnothing$,
  and $P_{\ee_3} = \varnothing$. Although
  $\ee_1 \not\in \bigcup_{\sigma \in \Sigma(2)} \sB_{\sigma}$, we have
  $\Span(\ee_1) = E^{1}(0) \cap E^{3}(0)$. \hfill $\Diamond$
\end{example}

The lattice points in the parliament of polytopes for a toric vector bundle
correspond to a basis if and only if, for all $\uu \in M$, the subset
$\{ \ee \in E : \uu \in P_\ee \}$ is linearly independent.  The next example
illustrates how a single lattice point can correspond to a dependent
collection of global sections.

\begin{example}
  \label{exa:tangentBundle}
  Consider the tangent bundle $\cT_{\PP^d}$ on $\PP^d$.  The minimal lattice
  points $\vv_i$ generating the $i$\nobreakdash-th ray in the fan of $\PP^d$
  equals the $i$\nobreakdash-th standard basis vector in $\CC^d$ for
  $1 \leq i \leq d$, and the additional ray is generated by
  $\vv_{d+1} := - \vv_1 - \vv_2 - \dotsb - \vv_{d}$.  The maximal cones are
  $\sigma_i := \pos(\vv_1, \vv_2, \dotsc ,\vv_{i-1}, \vv_{i+1}, \vv_{i+2},
  \dotsc, \vv_{d+1})$ for $1 \leq i \leq d+1$; compare with Example~3.1.10 in
  \cite{CLS} or Section~1.4 in \cite{Fulton}.  Following Subsection~2.3.5 of
  \cite{Kly}, we identify the fibre $E$ of $\cT_{\PP^d}$ over the identity of
  the torus $T$ with $N \otimes_\ZZ \CC \cong \CC^d$.  Hence, the vectors
  $\vv_1, \vv_2, \dotsc, \vv_d$ also form the standard basis for $E = \CC^d$
  and the decreasing filtrations defining $\cT_{\PP^d}$ are
  \[
  E^{i}(j) =
    \left\{
      \renewcommand{\arraystretch}{0.9}
      \renewcommand{\arraycolsep}{1pt}
      \begin{array}{lcc}
        E & \;\; \text{if} \;\; &  j \leq 0 \\
        \Span(\vv_i) & \text{if} & j = 1 \\
        0 & \text{if} & j > 1 
      \end{array}
    \right.
  \qquad \text{for $1 \leq i \leq d+1$.}
  \]
  Writing $\ww_1, \ww_2, \dotsc, \ww_d$ for the dual basis of $M$
  corresponding to the basis $\vv_1, \vv_2, \dotsc, \vv_d \in N$, we have
  $\bm{u}(\sigma_i) = \{ \ww_1 \!-\! \ww_i, \ww_2 \!-\! \ww_i, \dotsc,
  \ww_{i-1} \!-\! \ww_i, -\ww_i, \ww_{i+1} \!-\! \ww_i, \ww_{i+2} \!-\! \ww_i,
  \dotsc, \ww_d \!-\! \ww_i \}$ for $1 \leq i \leq d$, and
  $\bm{u}(\sigma_{d+1}) = \{ \ww_1, \ww_2, \dotsc, \ww_d \}$.  Hence, the
  ground set of the matroid $\Mat(\cT_{\PP^d})$ is
  $\{ \vv_1, \vv_2, \dotsc, \vv_{d+1} \}$ and the convex polytopes in the
  parliament for $\cT_{\PP^d}$ are
  \[
    P_{\vv_i} = \bigl\{ \uu \in M \otimes_\ZZ \RR :
    \text{$\langle \uu, \vv_i \rangle \leq 1$ and
      $\langle \uu, \vv_j \rangle \leq 0$ for all $j \neq i$} \bigr\} \, .
  \]  
  The lattice points in the parliament of polytopes for $\cT_{\PP^d}$
  correspond to the following $(d+1)^2$ global sections:
  $\vv_i \otimes \chi^{\ww_j-\ww_i}$ for $1 \leq i, j \leq d$,
  $\vv_i \otimes \chi^{-\ww_i}$ for $1 \leq i \leq d$,
  $\vv_{d+1} \otimes \chi^{\ww_i}$ for $1 \leq i \leq d$, and
  $\vv_{d+1} \otimes \chi^{\mathbf{0}}$.  The origin $\mathbf{0} \in M$ is
  contained in all $d+1$ polytopes, which yields $d+1$ global sections in a
  $d$-dimensional vector space.  Following Remark~\ref{rem:gens}, the flat
  $\{ \vv_i \}$ in the matroid $\Mat(\cT_{\PP^d})$ corresponds to the toric
  line bundle $\cO_{\PP^d}(D_i)$ for $1 \leq i \leq d+1$, and the flat given
  by the unique circuit $\{ \vv_1, \vv_2, \dotsc, \vv_{d+1} \}$ in
  $\Mat(\cT_{\PP^d})$ corresponds to $\cO_{\PP^d}$.  Hence, we obtain the
  short exact sequence
  \[
    0 \longrightarrow \cO_{\PP^d} \longrightarrow \bigoplus_{i=1}^{d+1}
    \cO_{\PP^d}(D_i) \xrightarrow{\;\; \eta \;\;} \cT_{\PP^d} \longrightarrow
    0 \, ,
  \]
  which is dual to the classic Euler sequence; see Theorem~8.1.6 in
  \cite{CLS}.

  When $d = 2$, it is possible to visualize the parliament of polytopes.  In
  this case, the associated characters are
  $\bm{u}(\sigma_1) = \{ (-1,0), (-1,1) \}$,
  $\bm{u}(\sigma_2) = \{ (1,-1), (0,-1) \}$,
  $\bm{u}(\sigma_3) = \{ (1,0), (0,1) \}$, and the convex polytopes are
  $P_{\vv_1} = \conv \bigl( (0,0), (1,0), (1,-1) \bigr)$,
  $P_{\vv_2} = \conv \bigl( (0,0), (0,1), (-1,1) \bigr)$,
  $P_{\vv_3} = \conv \bigl( (0,0), (-1,0), (0,-1) \bigr)$.  In
  Figure~\ref{fig:tangentBundle}, the associated characters are represented by
  \begin{figure}[t]
    \begin{tikzpicture}[x=0.75cm, y=0.75cm, line width=1.25pt]
      \foreach \x in {-2,-1,1,2} {
        \draw[color=white!40!black] (\x, 2pt) -- (\x, -2pt);}
      \foreach \y in {-1,1} {
        \draw[color=white!40!black] (2pt, \y) -- (-2pt, \y);}
      \draw[color=white!40!black, <->] (-2.4, 0) -- (2.4,0) {};
      \draw[color=white!40!black, <->] (0, -1.6) -- (0, 1.8) {};
      \draw[color=blue, fill=blue, opacity=0.5] (0,0) -- (-1,0) -- (0,-1) --
      (0,0) -- cycle;
      \draw[color=blue, fill=blue, opacity=0.5] (0,0) -- (1,0) -- (1,-1) --
      (0,0) -- cycle;
      \draw[color=blue, fill=blue, opacity=0.5] (0,0) -- (0,1) -- (-1,1) --
      (0,0) -- cycle;
      \draw[color=blue] (1,0) -- (-1,0) -- (0,-1) -- (0,1) -- (-1,1) -- (1,-1)
      -- (1,0) -- cycle;
      \node[circle, fill=black, inner sep=2.0pt] () at (0,0) {};
      \node[rectangle, fill=black, inner sep=2.5pt] () at (1,0) {};
      \node[rectangle, fill=black, inner sep=2.5pt] () at (0,1) {};
      \node[star, star points=5, star point ratio = 0.3, fill=black, 
      inner sep=3.5pt] () at (-1,1) {};
      \node[star, star points=5, star point ratio = 0.3, fill=black, 
      inner sep=3.5pt] () at (-1,0) {};
      \node [diamond, fill=black, inner sep=2.0pt] () at (0,-1) {};
      \node [diamond, fill=black, inner sep=2.0pt] () at (1,-1) {};
      \node[color=black] () at (-0.9, -0.7) {$P_{\vv_3}$};
      \node[color=black] () at (-0.5, 1.5) {$P_{\vv_2}$};
      \node[color=black] () at (1.5, -0.6) {$P_{\vv_1}$};
    \end{tikzpicture}
    \caption{The parliament of polytopes for $\cT_{\PP^2}$}
    \label{fig:tangentBundle}
  \end{figure}
  asterisks, diamonds, and squares respectively.  The polytopes are
  represented by shaded triangles and the other lattice point lying in the
  polytopes is represented by a circle.  \hfill $\Diamond$
\end{example}

\section{Globally Generated Toric Vector Bundles}
\label{sec:basepointfree}

\noindent
In this section, we establish our criterion for deciding whether a toric
vector bundle is globally generated. To detect the global generation of a
toric vector bundle $\cE$ from its parliament of polytopes, we need a local
description for a global section in coordinates near a $T$-fixed point.

To achieve this, consider a maximal cone $\sigma \in \Sigma(d)$ and the
$T$-fixed point $x_\sigma$.  By reordering the rays (if necessary), we assume
that $\sigma = \pos(\vv_1, \vv_2, \dotsc, \vv_d)$.  Since $X$ is a smooth
toric variety, the unique minimal generators $\ww_1, \ww_2, \dotsc, \ww_d$ of
the dual cone $\sigma^\vee$ form a $\ZZ$-basis for $M$.  By indexing the
underlying set $\underline{\bm{u}}(\sigma)$ of associated characters, we have
$\underline{\bm{u}}(\sigma) = \{ \uu_{\sigma,1}, \uu_{\sigma,2}, \dotsc,
\uu_{\sigma,s} \} \subset M$, for some integer $s$ satisfying
$1 \leq s \leq r$.  Following Section~6.3 in \cite{Kly}, we identify the fibre
of $\cE$ over the $T$-fixed point $x_\sigma$ with the $\CC$-vector space
\[
  \bigoplus_{\uu \in \underline{\bm{u}}(\sigma)}
  \frac{E^{\sigma}_\uu}{E^{\sigma}_{> \uu}} = \bigoplus_{k = 1}^s
  \frac{E^{\sigma}_{\uu_{\sigma,k}}}{E^{\sigma}_{> \uu_{\sigma,k}}} \cong
  \cE_{x_{\sigma}} \cong \CC^r \, ,
\]
where
$E^{\sigma}_\uu := \bigcap_{i = 1}^{d} E^{i}(\langle \uu, \vv_i \rangle)$ and
$E^{\sigma}_{>\uu} := \sum_{\mathbf{0} \neq \uu' - \uu \in \sigma^\vee}
E^{\sigma}_{\uu'} = \sum_{i = 1}^{d} E^{\sigma}_{\uu+\ww_i}$; see
Section~\ref{sec:bundles}.  The linear subspaces $E^{\sigma}_{\uu}$ and
$E^{\sigma}_{> \uu}$ correspond to flats in the matroid $\Mat(\cE)$ that are
generated by subsets of any compatible basis $\sB_\sigma$ in $\Mat(\cE)$.  For
a given compatible basis $\sB_{\sigma}$, this decomposition of the fibre
yields a partition of the set $\sB_{\sigma}$.  Specifically, we have the
disjoint union
\[
  \sB_{\sigma} = \sB_{\sigma,1} \sqcup \sB_{\sigma,2} \sqcup \dotsb \sqcup
  \sB_{\sigma,s}\,
\] 
where the subset $\sB_{\sigma,k}$ consists of all $\ee \in \sB_{\sigma}$ such
that
$\ee \in E^{\sigma}_{\uu_{\sigma,k}} \setminus E^{\sigma}_{> \uu_{\sigma,k}}$
for $1 \leq k \leq s$. By construction, we see that, for all
$1 \leq k \leq s$, the quotient space
$E^{\sigma}_{\uu_{\sigma,k}} / E^{\sigma}_{> \uu_{\sigma, k}}$ is identified
with the linear subspace $\Span(\sB_{\sigma,k}) \subseteq E$ and the
multiplicity of $\uu_{\sigma,k}$ in the multiset $\bm{u}(\sigma)$ of
associated characters equals the number of elements in $\sB_{\sigma,k}$.  With
these preliminaries, we have the following technical lemma.

\begin{lemma}
  \label{lem:function}
  Let $\sigma = \pos(\vv_1, \vv_2, \dotsc, \vv_d)$ be a maximal cone, let
  $\underline{\bm{u}}(\sigma) = \{ \uu_{\sigma,1}, \uu_{\sigma,2}, \dotsc,
  \uu_{\sigma,s} \}$ be the underlying set of associated characters, and let
  $\sB_{\sigma} = \sB_{\sigma,1} \sqcup \sB_{\sigma,2} \sqcup \dotsb \sqcup
  \sB_{\sigma,s}$ be the corresponding partition of a compatible basis in
  $\Mat(\cE)$.  For each $\ee \in \sB_{\sigma}$, consider the continuous
  piecewise linear function on the fan $\Sigma$ defined by
  $\varphi_{\ee}(\vv_i) := \max\{ j \in \ZZ : \ee \in E^{i}(j) \}$ for all
  $1 \leq i \leq n$.  If $\ee \in \sB_{\sigma, k}$ for some $1 \leq k \leq s$,
  then we have $\varphi_{\ee}(\vv_i) = \langle \uu_{\sigma, k}, \vv_i \rangle$
  for all $1 \leq i \leq d$.  In particular, if $\ee \in \sB_{\sigma,k}$ and
  $\uu_{\sigma, k} \in P_{\ee}$, then the lattice point $\uu_{\sigma, k}$ is a
  vertex of the polytope $P_{\ee}$.
\end{lemma}

\begin{proof} 
  Fix an index $k$ such that $1 \leq k \leq s$ and an element
  $\ee \in \sB_{\sigma,k}$.  Since
  $\ee \in \sB_{\sigma,k} \subset E^{\sigma}_{\uu_{\sigma,k}}$, it follows
  that $\ee \in E^{i}(\langle \uu_{\sigma,k}, \vv_i \rangle)$ for all
  $1 \leq i \leq d$, so
  $\max\{ j \in \ZZ : \ee \in E^{i}(j) \} \geq \langle \uu_{\sigma,k}, \vv_i
  \rangle$ for all $1 \leq i \leq d$.  Suppose that, for some index $i$
  satisfying $1 \leq i \leq d$, we have
  $\max\{ j \in \ZZ : \ee \in E^{i}(j) \} > \langle \uu_{\sigma, k}, \vv_i
  \rangle$.  It would follow that
  $\ee \subset E^{\sigma}_{\uu_{\sigma,k} + \ww_\ell}$ for some minimal
  generator $\ww_\ell$ of the dual cone $\sigma^\vee$.  However, this would
  imply that $\ee \in E^{\sigma}_{>\uu_{\sigma,k}}$ which contradicts the
  definition of $\sB_{\sigma,k}$.  Therefore, we conclude that
  $\max\{ j \in \ZZ : \ee \in E^{i}(j) \} = \langle \uu_{\sigma,k}, \vv_i
  \rangle$ for all $1 \leq i \leq d$.  Moreover, when
  $\ee \in \sB_{\sigma,k}$, the piecewise linear function
  $\varphi_{\sigma, k}$ is simply the support function for the polytope
  $P_{\ee}$ in the parliament for $\cE$. Thus, if $\ee \in \sB_{\sigma,k}$ and
  $\uu_{\sigma,k} \in P_{\ee}$, then we see that the lattice point
  $\uu_{\sigma, k}$ is a vertex of this polytope.
\end{proof}

We can now give a local description for a global section around the $T$-fixed
point $x_\sigma$.  The affine semigroup ring $\CC[\sigma^\vee \cap M]$ is the
coordinate ring for the affine open set $U_\sigma \subset X$ and is isomorphic
to the polynomial ring $\CC[y_1,y_2, \dotsc, y_d]$ where
$y_i := \chi^{-\ww_i}$ for $1 \leq i \leq d$.  For any compatible basis
$\sB_{\sigma}$ and any vector $\ee' \in E$, there exists unique scalars
$\lambda_\ee\in \CC$, for all $\ee \in \sB_{\sigma}$, such that
$\ee' = \sum_{\ee \in \sB_{\sigma}} \lambda_{\ee} \, \ee$.  By
Proposition~\ref{pro:sections}, a $T$-equivariant global section of $\cE$ has
the form $\ee' \otimes \chi^{-\uu}$, where $\ee' \in E$ and $\uu \in M$.
Hence, the section $\ee' \otimes \chi^{-\uu}$ is given in local coordinates
near $x_\sigma$ by
\begin{equation}
  \label{eq:local}
  \tag{$\ast$}
  \sum_{\ee \in \sB_{\sigma}} \lambda_\ee \biggl( \ee \otimes \prod_{i
    = 1}^{d} y_i^{- \langle \uu, \vv_i \rangle + \varphi_{\ee}(\vv_i)} \biggr) \, .
  \setcounter{equation}{3}
\end{equation}
Using this local description, we characterize the global generation of a toric
vector bundle via its parliament of polytopes.

\begin{theorem2}
  A toric vector bundle $\cE$ is globally generated if and only if, for all
  $\sigma \in \Sigma(d)$, the associated character $\bm{u}(\sigma)$ are
  vertices of polytopes in the parliament and the elements indexing these
  polytopes form a basis in the matroid $\Mat(\cE)$.
\end{theorem2}

\begin{proof}[Proof of Theorem~\ref{thm:main1}] 
  As Proposition~\ref{pro:sections} shows, the toric vector bundle $\cE$ has a
  $T$-equivariant basis of global sections.  Hence, the locus in the toric
  variety $X$ on which all global sections vanish is closed and $T$-invariant.
  Since $X$ is complete, it follows that the toric vector bundle $\cE$ is
  globally generated if and only if it is globally generated at every
  $T$-fixed point.

  Fix a maximal cone
  $\sigma = \pos(\vv_1, \vv_2, \dotsc, \vv_d) \in \Sigma(d)$, let
  $\underline{\bm{u}}(\sigma) = \{ \uu_{\sigma,1}, \uu_{\sigma,2}, \dotsc,
  \uu_{\sigma,s} \}$ be the underlying set of associated characters, and let
  $\sB_{\sigma} = \sB_{\sigma,1} \sqcup \sB_{\sigma,2} \sqcup \dotsb \sqcup
  \sB_{\sigma,s}$ be the corresponding partition of a compatible basis in the
  matroid $\Mat(\cE)$.  The toric vector bundle $\cE$ is globally generated at
  the $T$-fixed point $x_{\sigma}$ if and only if the evaluation map
  \[
    \operatorname{ev}_{\sigma} \colon H^0(X,\cE) \to H^0(X, \cE \otimes
    \cO_X/{\mathfrak{m}}_{x_{\sigma}}) \cong \Span(\sB_{\sigma})
  \] 
  is surjective.  Since a $T$-equivariant global section
  $\ee' \otimes \chi^{-\uu}$ is given in local coordinates near $x_\sigma$ by
  \eqref{eq:local}, its evaluation at the $T$-fixed point $x_{\sigma}$ is
  given by
  \[
    \sum_{\ee \in \sB_\sigma} \lambda_\ee \biggl( \ee \otimes \prod_{i =
      1}^{d} y_i^{- \langle \uu, \vv_i \rangle + \varphi_{\ee}(\vv_i)} \biggr)
    \bigg|_{y_1 = y_2 = \dotsb = y_d = 0} \, .
  \]
  The $\ee$-th summand in this expression has neither a zero nor a pole at
  $(y_1, y_2, \dotsc, y_d) = (0,0, \dotsc, 0)$ if and only if we have
  $- \langle \uu, \vv_i \rangle + \varphi_{\ee}(\vv_i) = 0$ for all
  $1 \leq i \leq d$.  By Lemma~\ref{lem:function}, it follows that there
  exists an index $k$ such that $\ee \in \sB_{\sigma, k}$ and
  $\uu = \uu_{\sigma,k}$.  In this case, the lattice point $\uu_{\sigma, k}$
  is also a vertex of the polytope $P_{\ee}$ in the parliament for $\cE$.
  Hence, the image of a $T$-equivariant global section under the evaluation
  map $\operatorname{ev}_{\sigma}$ is nonzero in the fibre at $x_\sigma$ if
  and only if the global section has the form
  $\sum_{\ee \in \sB_{\sigma, k}} \lambda_{\ee} (\ee \otimes
  \chi^{-\uu_{\sigma, k}})$, for some $1 \leq k \leq s$, which evaluates to
  $\sum_{\ee \in \sB_{\sigma, k}} \lambda_\ee \, \ee \in \cE_{x_\sigma}$.
  Therefore, the evaluation map $\operatorname{ev}_{\sigma}$ is surjective if
  and only if there exists a compatible basis $\sB_{\sigma}$ such that each
  $\ee \otimes \chi^{- \uu_{\sigma,k}}$, for $\ee \in \sB_{\sigma, k}$ and
  $1 \leq k \leq s$, is a global section.
\end{proof}

Using Theorem~\ref{thm:main1}, we create a low-rank toric vector bundle on
$\PP^2$ that is not globally generated; Example~\ref{exa:notBPF2} will show
that this low-rank toric vector bundle is also ample.

\begin{example} 
  \label{exa:notBPF} 
  To describe a second toric vector bundle $\cF$ of rank $3$ on $\PP^2$, we
  use the notation introduced in Example~\ref{exa:tangentBundle}.
  Specifically, the minimal lattice points generating the rays in the fan are
  $\vv_1 = (1,0)$, $\vv_2 = (0,1)$, $\vv_3 = (-1,-1)$, and the maximal cones
  are $\sigma_1 = \pos(\vv_2, \vv_3)$, $\sigma_2 = \pos(\vv_1, \vv_3)$,
  $\sigma_3 = \pos(\vv_1, \vv_2)$.  If $\ee_1, \ee_2, \ee_3$ denotes the
  standard basis of $E = \CC^3$, then the decreasing filtrations defining
  $\cF$ are
  \begin{xalignat*}{2}
    E^{1}(j) & = 
    \left\{
      \renewcommand{\arraystretch}{0.9}
      \renewcommand{\arraycolsep}{1pt}
      \begin{array}{lcrcl}
        E & \;\; \text{if} \;\; & & j & \leq -1 \\
        \Span ( \ee_1, \ee_2 ) & \text{if} & -1 < & j & \leq 0 \\
        \Span ( \ee_1 ) & \text{if} & 0 < & j & \leq 4 \\
        0 & \text{if} & 4 < & j & \, ,
      \end{array}
    \right. &
    E^{3}(j) &= 
    \left\{
      \renewcommand{\arraystretch}{0.9}
      \renewcommand{\arraycolsep}{1pt}
      \begin{array}{lcrcl}
        E & \;\; \text{if} \;\; & & j & \leq -1 \\
        \Span ( \ee_2 - \ee_3, \ee_1 - \ee_2 ) & \text{if} & -1 < & j &
        \leq 2 \\
        \Span ( \ee_1 -\ee_2 ) & \text{if} & 2 < & j & \leq 3 \\
        0 & \text{if} & 3 < & j & \, ,
      \end{array}
    \right. \\
    E^{2}(j) &= 
    \left\{
      \renewcommand{\arraystretch}{0.9}
      \renewcommand{\arraycolsep}{1pt}
      \begin{array}{lcrcl}
        E & \;\; \text{if} \;\; & & j & \leq -2 \\
        \Span ( \ee_2, \ee_3 ) & \text{if} & -2 < & j & \leq 0 \\
        \Span ( \ee_3 ) & \text{if} & 0 < & j & \leq 3 \\
        0 & \text{if} & 3 < & j & \, .
      \end{array}
      \right. 
  \end{xalignat*}
  It follows that the ground set of the matroid $\Mat(\cF)$ is
  $\{ \ee_1, \ee_1 - \ee_2, \ee_2, \ee_2 - \ee_3, \ee_3 \}$. On each maximal
  cone, the associated characters and the unique choice of compatible bases
  are
  \begin{xalignat*}{2}
    \bm{u}(\sigma_1) &= \{ (-1, -2), (-2,0), (-2, 3) \} \, , & 
    \sB_{\sigma_1} &= \{ \ee_1 - \ee_2, \ee_2 - \ee_3, \ee_3 \} \, , \\
    \bm{u}(\sigma_2) &= \{ (4, -3), (0,-3), (-1, -1) \} \, , & 
    \sB_{\sigma_2} &= \{ \ee_1, \ee_1 - \ee_2, \ee_2 - \ee_3 \} \, , \\
    \bm{u}(\sigma_3) &= \{ (4, -2), (0,0), (-1, 3) \} \, , & 
    \sB_{\sigma_3} &= \{ \ee_1, \ee_2, \ee_3 \} \, ,
  \end{xalignat*}      
  so the convex polytopes in the parliament for $\cF$ are
  \begin{xalignat*}{2}
    P_{\ee_1} &= \conv \bigl( (3,-2), (4,-2), (4,-3) \bigr) \, , &
    P_{\ee_2 - \ee_3} &= \conv \bigl( (-2,0), (-1,0), (-1,-1) \bigr) \, , \\
    P_{\ee_1 - \ee_2} &= \conv \bigl( (-1,-2), (0,-2), (0,-3) \bigr) \, , &
    P_{\ee_3} &= \conv \bigl( (-2,3), (-1,3), (-1,2) \bigr) \, , \\
    P_{\ee_2} &= \varnothing \, .
  \end{xalignat*}
  In Figure~\ref{fig:notSpanned}, the associated characters are represented by
  \begin{figure}[t]
    \begin{tikzpicture}[x=0.75cm, y=0.75cm, line width=1.25pt]
      \foreach \x in {-5,-4,-3,-2,-1,1,2,3,4,5} {
        \draw[color=white!40!black] (\x, 2pt) -- (\x, -2pt);}
      \foreach \y in {-3,-2,-1,1,2,3} {
        \draw[color=white!40!black] (2pt, \y) -- (-2pt, \y);}
      \draw[color=white!40!black, <->] (-5.4, 0) -- (5.4,0) {};
      \draw[color=white!40!black, <->] (0, -3.4) -- (0, 3.4) {};
      \draw[color=blue, fill=blue, opacity=0.5] (3,-2) -- (4,-2) -- (4,-3) --
      (3,-2) -- cycle;
      \draw[color=blue, fill=blue, opacity=0.5] (-1,-2) -- (0,-2) -- (0,-3) --
      (-1,-2) -- cycle;
      \draw[color=blue, fill=blue, opacity=0.5] (-2,3) -- (-1,3) -- (-1,2) --
      (-2,3) -- cycle;
      \draw[color=blue, fill=blue, opacity=0.5] (-2,0) -- (-1,0) -- (-1,-1) --
      (-2,0) -- cycle;
      \draw[color=blue] (4,-2) -- (-1,-2) -- (0,-3) -- (0,0) -- (-2,0) --
      (-1,-1) -- (-1,3) -- (-2,3) -- (4,-3) -- (4,-2) -- cycle;
      \node[circle, fill=black, inner sep=2.0pt] () at (-1,2) {};
      \node[circle, fill=black, inner sep=2.0pt] () at (-1,0) {};
      \node[circle, fill=black, inner sep=2.0pt] () at (0,-2) {};
      \node[circle, fill=black, inner sep=2.0pt] () at (3,-2) {};
      \node[rectangle, fill=black, inner sep=2.5pt] () at (4,-2) {};
      \node[rectangle, fill=black, inner sep=2.5pt] () at (0,0) {};
      \node[rectangle, fill=black, inner sep=2.5pt] () at (-1,3) {};
      \node[rectangle, fill=white, inner sep=1pt] () at (0,0) {};
      \node[star, star points=5, star point ratio = 0.3, fill=black, 
      inner sep=3.5pt] () at (-1,-2) {};
      \node[star, star points=5, star point ratio = 0.3, fill=black, 
      inner sep=3.5pt] () at (-2,0) {};
      \node[star, star points=5, star point ratio = 0.3, fill=black, 
      inner sep=3.5pt] () at (-2,3) {};
      \node[diamond, fill=black, inner sep=2.0pt] () at (4,-3) {};
      \node[diamond, fill=black, inner sep=2.0pt] () at (0,-3) {};
      \node[diamond, fill=black, inner sep=2.0pt] () at (-1,-1) {};
      \node[regular polygon, regular polygon sides=3, fill=red!60!black, 
      inner sep=1.5pt] () at (1,-1) {};
      \node[color=black] () at (3.1,-2.7) {$P_{\ee_1}$};
      \node[color=black] () at (-1.9,2.3) {$P_{\ee_3}$};
      \node[color=black] () at (-2,-0.7) {$P_{\ee_2 - \ee_3}$};
      \node[color=black] () at (-1,-2.7) {$P_{\ee_1 - \ee_2}$};
    \end{tikzpicture}
    \caption{The parliament of polytopes for $\cF$}
    \label{fig:notSpanned}
  \end{figure}
  asterisks, diamonds, and squares respectively. The polytopes are represented
  by shaded triangles and the other lattice points lying in the polytopes are
  represented by circles.  The square with empty interior represents the
  unique associated character $(0,0)$ that does not lie in any of the
  polytopes.  Therefore, Theorem~\ref{thm:main1} shows that $\cF$ is not
  globally generated.  \hfill $\Diamond$
\end{example}

\begin{remark}
  Our diagrams for parliaments of polytopes, such as the one appearing in
  Figure~\ref{fig:notSpanned}, have at least some superficial similarities to
  the twisted polytopes appearing in Section~6 of \cite{KT}.  It would be
  interesting to develop a more substantive connection.
\end{remark}

If all the polytopes in the parliament for a toric vector bundle $\cE$
correspond to globally-generated line bundles, then the toric vector bundle
$\cE$ itself is globally-generated.  However, the converse is false.
We close this section with a globally-generated toric vector bundle in which
some members of the parliament of polytopes do not correspond to
globally-generated line bundles. 

\begin{example}  
  \label{exa:nonvanishing}
  To describe our toric vector bundle $\cG$ of rank $2$ on the first
  Hirzebruch surface $X = \PP\bigl( \cO_{\PP^1} \oplus \cO_{\PP^1}(1) \bigr)$,
  we first specify the fan.  The minimal lattice points generating the rays
  are $\vv_1 = (1,0)$, $\vv_2 = (0,1)$, $\vv_3 = (-1,1)$, $\vv_4 = (0,-1)$,
  and the maximal cones are $\sigma_{1,2} = \pos(\vv_1, \vv_2)$,
  $\sigma_{2,3} = \pos(\vv_2,\vv_3)$, $\sigma_{3,4} = \pos(\vv_3, \vv_4)$,
  $\sigma_{1,4} = \pos(\vv_1,\vv_4)$.  If $\ee_1, \ee_2$ denotes the standard
  basis of $E = \CC^2$, then the decreasing filtrations defining $\cG$ are
  \begin{xalignat*}{2}
    E^{1}(j) &= 
    \left\{
      \renewcommand{\arraystretch}{0.9}
      \renewcommand{\arraycolsep}{1pt}
      \begin{array}{lcrcl}
        E & \;\; \text{if} \;\; & & j & \leq -2 \\
        \Span ( \ee_1 ) & \text{if} & -2 < & j & \leq 4 \\
        0 & \text{if} &  4 < & j &
      \end{array}
    \right. &
    E^{3}(j) &= 
    \left\{
      \renewcommand{\arraystretch}{0.9}
      \renewcommand{\arraycolsep}{1pt}
      \begin{array}{p{70pt}crcl}
        $E$ & \;\; \text{if} \;\;  & \phantom{-1 <} & j & \leq 0 \\
        $\Span ( \ee_2 )$ & \text{if} & 0 < & j & \leq 5 \\
        $0$ & \text{if} & 5 < & j
      \end{array}
    \right. \\
    E^{2}(j) &= 
    \left\{
      \renewcommand{\arraystretch}{0.9}
      \renewcommand{\arraycolsep}{1pt}
      \begin{array}{lcrcl}
        E & \;\; \text{if} \;\; & & j & \leq 2 \\
        \Span ( \ee_1 ) & \text{if} & \phantom{-}2 < & j & \leq 3 \\
        0 & \text{if} & 3 < & j
      \end{array}
    \right. &
    E^{4}(j) &= 
    \left\{
      \renewcommand{\arraystretch}{0.9}
      \renewcommand{\arraycolsep}{1pt}
      \begin{array}{p{70pt}crcl}
        $E$ & \;\; \text{if} \;\; &  & j & \leq -1 \\
        $\Span ( \ee_1 + \ee_2 )$ & \text{if} & -1 < & j & \leq 3 \\
        $0$ & \text{if} & 3 < & j & \, . 
      \end{array}
    \right.
  \end{xalignat*}
  It follows that the ground set of the matroid $\Mat(\cG)$ is
  $\{ \ee_1, \ee_1 + \ee_2, \ee_2 \}$.  On the maximal cones, the associated
  characters and a choice of compatible bases are
  \begin{xalignat*}{2}
    \bm{u}(\sigma_{1,2}) &= \{(-2, 2), (4,3) \}
    & \sB_{\sigma_{1,2}} &= \{\ee_2, \ee_1 \} \\
    \bm{u}(\sigma_{2,3}) &= \{ (-3, 2), (3,3) \}
    & \sB_{\sigma_{2,3}} &= \{\ee_2, \ee_1 \} \\
    \bm{u}(\sigma_{3,4})& = \{ (-4, 1), (-3,-3) \}
    & \sB_{\sigma_{3,4}}& = \{\ee_2, \ee_1+\ee_2 \} \\
    \bm{u}(\sigma_{1,4}) &= \{(-2, -3), (4,1) \}
    &  \sB_{\sigma_{1,4}} &= \{\ee_1+\ee_2, \ee_1 \} \, .
  \end{xalignat*}
  The convex polytopes in the parliament for $\cG$ are
  \begin{xalignat*}{2}
    P_{\ee_1} &= \conv \bigl( (1,1), (3,3), (4,3), (4,1) \bigr) \, , &
    P_{\ee_2} &= \conv \bigl( (-4,1), (-3,2), (-2,2), (-2,1) \bigr) \, , \\
    P_{\ee_1+\ee_2} &= \conv \bigl( (-3,-3), (-2,-2), (-2,-3) \bigr) \, . 
  \end{xalignat*}
  The set $\{ \ee_1 + \ee_2, \ee_1 \}$ also forms a compatible basis on
  $\sigma_{1,2}$, but the character $(-2,2)$ does not belong to the polytope
  $P_{\ee_1 + \ee_2}$.  In Figure~\ref{fig:nonvanishing}, the associated
  characters are represented by squares, asterisks, diamonds, and pentagons
  \begin{figure}[ht]
    \begin{tikzpicture}[x=0.75cm, y=0.75cm, line width=1.25pt]
      \foreach \x in {-4,-3,-2,-1,1,2,3,4} {
        \draw[color=white!40!black] (\x, 2pt) -- (\x, -2pt);}
      \foreach \y in {-3,-2,-1,1,2,3} {
        \draw[color=white!40!black] (2pt, \y) -- (-2pt, \y);}
      \draw[color=white!40!black, <->] (-4.4, 0) -- (4.4,0) {};
      \draw[color=white!40!black, <->] (0, -3.4) -- (0, 3.4) {};
      \draw[color=blue, fill=blue, opacity=0.5] (1,1) -- (3,3) -- (4,3) --
      (4,1) -- (1,1) -- cycle;
      \draw[color=blue, fill=blue, opacity=0.5] (-4,1) -- (-3,2) -- (-2,2) --
      (-2,1) -- (-4,1) -- cycle;
      \draw[color=blue, fill=blue, opacity=0.5] (-3,-3) -- (-2,-2) -- (-2,-3)
      -- (-3,-3) -- cycle;
      \draw[color=blue] (4,3) -- (3,3) -- (-3,-3) -- (-2,-3) -- (-2,2) --
      (-3,2) -- (-4,1) -- (4,1) -- (4,3) -- cycle;
      \node[circle, fill=black, inner sep=2.0pt] () at (1,1) {};
      \node[circle, fill=black, inner sep=2.0pt] () at (2,1) {};
      \node[circle, fill=black, inner sep=2.0pt] () at (3,1) {};
      \node[circle, fill=black, inner sep=2.0pt] () at (2,2) {};
      \node[circle, fill=black, inner sep=2.0pt] () at (3,2) {};
      \node[circle, fill=black, inner sep=2.0pt] () at (4,2) {};
      \node[circle, fill=black, inner sep=2.0pt] () at (-2,1) {};
      \node[circle, fill=black, inner sep=2.0pt] () at (-3,1) {};
      \node[circle, fill=black, inner sep=2.0pt] () at (-2,-2) {};
      \node[regular polygon, regular polygon sides=3, fill=red!60!black, 
      inner sep=1.5pt] () at (-1,0) {};
      
      \node[rectangle, fill=black, inner sep=2.5pt] () at (-2,2) {};
      \node[rectangle, fill=black, inner sep=2.5pt] () at (4,3) {};
      \node[star, star points=5, star point ratio = 0.3, fill=black, 
      inner sep=3.5pt] () at (-3,2) {};
      \node[star, star points=5, star point ratio = 0.3, fill=black, 
      inner sep=3.5pt] () at (3,3) {};
      \node[diamond, fill=black, inner sep=2.0pt] () at (-4,1) {};
      \node[diamond, fill=black, inner sep=2.0pt] () at (-3,-3) {};
      \node[regular polygon, regular polygon sides=5, fill=black, 
      inner sep=2.3pt] () at (-2,-3) {}; 
      \node[regular polygon, regular polygon sides=5, fill=black, 
      inner sep=2.3pt] () at (4,1) {};
      \node[color=black] () at (4.7,2.0) {$P_{\ee_1}$};
      \node[color=black] () at (-4.6,1.5) {$P_{\ee_2}$};
      \node[color=black] () at (-1.0,-2.5) {$P_{\ee_1 + \ee_2}$};
    \end{tikzpicture}
    \caption{The parliament of polytopes for $\cG$}
    \label{fig:nonvanishing}
  \end{figure}
  respectively.  The polytopes are represented by shaded regions and the other
  lattice points lying in the polytopes are represented by circles. We see
  that each associated character lies in a unique polytope in the parliament.
  Moreover, for each maximal cone, the elements indexing polytopes containing
  the associated characters are equal to our chosen compatible bases, so
  Theorem~\ref{thm:main1} shows that $\cG$ is globally generated.
  Remark~\ref{rem:gens} shows that the elements
  $\{ \ee_1, \ee_1 + \ee_2, \ee_2 \}$ correspond to the toric line bundles
  $\cO_X(4D_1 + 3D_2 - D_4)$, $\cO_X(-2D_1 + 2D_2 + 3D_4)$, and
  $\cO_X(-2D_1 + 2D_2 + 5D_3 - D_4)$ respectively.  The first two line bundles
  are very ample, but the third is not even globally generated.  The third
  line bundle is globally generated at the $T$-fixed points $x_{\sigma_{3,4}}$
  and $x_{\sigma_{1,4}}$, but not at the other $T$-fixed points. \hfill
  $\Diamond$
\end{example}


\section{Contrasting Notions of Positivity}
\label{sec:ample}

\noindent
In this section, we distinguish the ampleness of a toric vector bundle from
other algebraic notions of positivity.  Following Definition~6.1.1 in
\cite{PAG2}, a vector bundle $\cE$ on $X$ is ample or nef if the tautological
line bundle $\cO_{\PP(\cE)}(1)$ on the projectivized bundle $\PP(\cE)$ is
ample or nef, respectively.  Theorem~2.1 in \cite{HMP} states that \emph{a
  toric vector bundle on a complete toric variety is ample if and only if its
  restriction to every torus-invariant curve is ample}.  This provides the key
tool for recognizing ample toric vector bundles.

To be more precise, consider a $T$-invariant curve $C$ in $X$ corresponding to
the cone $\tau \in \Sigma(d-1)$.  Since $X$ is complete, there are two maximal
cones $\sigma$ and $\sigma'$ in $\Sigma(d)$ that contain $\tau$ and
$C \cong \PP^1$.  Given two lattice points $\uu$ and $\uu'$ in $M$ that agree
as linear functionals on $\tau$, the toric line bundle $\cL_{\uu,\uu'}$ on the
union $U_\sigma \cup U_{\sigma'}$ is constructed by gluing
$\cL_{\uu}|_{U_\sigma}$ and $\cL_{\uu'}|_{U_{\sigma'}}$ via the transition
function $\chi^{\uu-\uu'}$, which is regular and invertible on $U_{\tau}$.  If
the lattice vector $\vv_\tau \in \sigma$ is dual to the primitive generator of
$\tau^{\perp}$, then the line bundle $\cL_{\uu,\uu'}|_C$ is isomorphic to
$\cO_{\PP^1}(\langle \uu, \vv_\tau \rangle D_1 - \langle \uu', \vv_{\tau}
\rangle D_2)$ where $D_1$ and $D_2$ are the irreducible $T$-invariant divisors
on $\PP^1$.  Corollary~5.5 and Corollary~5.10 in \cite{HMP} show that the
restriction $\cE|_C$ splits $T$-equivariantly into a sum of line bundles
$\cL_{\uu_1, \uu_1'}|_C \oplus \cL_{\uu_2, \uu_2'}|_C \oplus \dotsb \oplus
\cL_{\uu_r, \uu_r'}|_C$ and the pairs $(\uu_i^{}, \uu_i')$ are unique up to
reordering.  This pairing can be visualized as line segments parallel to
$\tau^{\perp}$ joining the associated characters in $\bm{u}(\sigma)$ and
$\bm{u}(\sigma')$.  Edges in the parliament of polytopes of $\cE$ are
contained in such line segments, but these line segments may connect disjoint
polytopes.  For each individual summand, we have
$\cL_{\uu, \uu'}|_C \cong \cO_{\PP^1}(a)$ where $\uu - \uu'$ is $a$ times the
primitive generator of $\tau^{\perp}$ that is positive on $\sigma$.
Pictorially, the integer $a$ is the normalized lattice distance between the
associated characters in the one-dimensional lattice
$(\tau^{\perp} + \uu) \cap M$.

To demonstrate this apparatus, we reestablish that the tangent bundle on
projective space is ample; compare with Remark~2.4 and Example~5.6 in
\cite{HMP}.

\begin{example}
  \label{exa:ampleTangent}
  Using the notation from Example~\ref{exa:tangentBundle}, the characters
  associated to the tangent bundle $\cT_{\PP^d}$ are
  $\bm{u}(\sigma_i) = \{ \ww_1 \!-\! \ww_i, \ww_2 \!-\! \ww_i, \dotsc,
  \ww_{i-1} \!-\! \ww_i, - \ww_i, \ww_{i+1} \!-\! \ww_i, \ww_{i+2} \!-\!
  \ww_i, \dotsc, \ww_d \!-\! \ww_i \}$ for $1 \leq i \leq d$, and
  $\bm{u}(\sigma_{d+1}) = \{ \ww_1, \ww_2, \dotsc, \ww_d\}$.  On the
  $T$-invariant curve $C_{i,j}$ corresponding to the cone
  $\tau_{i,j} := \sigma_i \cap \sigma_j \in \Sigma(d-1)$ where
  $1 \leq i < j \leq d$, the characters in $\bm{u}(\sigma_i)$ and
  $\bm{u}(\sigma_j)$ are paired as follows: $(-\ww_i, \ww_i - \ww_j)$,
  $(\ww_j-\ww_i, -\ww_j)$, and $(\ww_k - \ww_i, \ww_k - \ww_j)$ for all
  $k \neq i$ or $j$.  Thus, we deduce that
  $\cT_{\PP^d}|_{C_{i,j}} = \cO_{\PP^1}(D_1+D_2) \oplus \bigl(
  \bigoplus_{j=1}^{d-1} \cO_{\PP^1}(D_2) \bigr) \cong \cO_{\PP^1}(2) \oplus
  \bigl( \bigoplus_{j=1}^{d-1} \cO_{\PP^1}(1) \bigr)$.  A similar calculation
  for the curve $C_{i,d+1}$, which corresponds to the cone
  $\tau_{i, d+1} := \sigma_i \cap \sigma_{d+1} \in \Sigma(d-1)$ where
  $1 \leq i \leq d$, yields
  $\cT_{\PP^d}|_{C_{i,d+1}} \cong \cO_{\PP^1}(2) \oplus \bigl(
  \bigoplus_{j=1}^{d-1} \cO_{\PP^1}(1) \bigr)$.  When $d = 2$, we also see
  from Figure~\ref{fig:tangentBundle} that the normalized lattice distance
  between matched pairs of associated characters is either $1$ or $2$.  Since
  the restriction to every $T$-invariant curve is ample, we conclude that
  $\cT_{\PP^d}$ is ample. \hfill $\Diamond$
\end{example}

With these tools, we can also prove directly that the cotangent bundle on a
smooth toric variety is never ample; compare with Section~6.3B in \cite{PAG2}.

\begin{example}
  \label{exa:cotangent}
  Let $\Omega_X$ be the cotangent bundle on a smooth toric variety $X$, and
  let $\Sigma$ be the fan of $X$.  Identifying the fibre $E$ over the identity
  of the torus $T$ with $M \otimes_\ZZ \CC \cong \CC^d$ as done in
  Section~2.3.5 in \cite{Kly}, the decreasing filtrations for $\Omega_X$ are
  \[
  E^{i}(j) = 
  \left\{
      \renewcommand{\arraystretch}{0.9}
      \renewcommand{\arraycolsep}{1pt}
      \begin{array}{lcl}
        E & \;\; \text{if} \;\; & j \leq -1 \\
        \vv_i^{\perp} & \text{if} & j = 0 \\
        0 & \text{if} &  j > 0 
      \end{array}
    \right. 
    \qquad \text{for all $1 \leq i \leq n$.}
  \]
  Consider two adjacent cones $\sigma, \sigma' \in \Sigma(d)$.  Since $X$ is
  smooth, we have $\sigma = \pos(\vv_1,\vv_2, \dotsc, \vv_d)$ where
  $\vv_1, \vv_2, \dotsc, \vv_d$ is a basis for $N$.  We may assume that
  $\sigma' = \pos(\vv_1, \vv_2, \dotsc, \vv_{d-1}, \vv_{d+1})$ where
  $\vv_{d+1} = a_1 \vv_1 + a_2 \vv_2 + \dotsb + a_{d-1} \vv_{d-1}- \vv_d$ for
  some $a_j \in \ZZ$.  If $\ww_1, \ww_2, \dotsc, \ww_d \in M$ form the dual
  basis to $\vv_1, \vv_2, \dotsc, \vv_d$, then the associated characters are
  $\bm{u}(\sigma) = \{ -\ww_1, -\ww_2, \dotsc, - \ww_d \}$ and
  $\bm{u}(\sigma') = \{ -\ww_1 - a_1 \ww_d, - \ww_2 - a_2 \ww_d, \dotsc,
  -\ww_{d-1} - a_{d-1} \ww_{d-1}, \ww_d \}$.  Along the $T$-invariant curve
  $C$ corresponding to the cone $\tau = \sigma \cap \sigma' \in \Sigma(d-1)$,
  the characters are paired as follows: $(-\ww_d, \ww_d)$ and
  $(-\ww_i,-\ww_i - a_i \ww_d)$ for $1 \leq i \leq d-1$.  Therefore, we obtain
  $\Omega_X |_{C} \cong \cO_{\PP^1}(-2) \oplus \bigl( \bigoplus_{i=1}^{d-1}
  \cO_{\PP^1}(a_i) \bigr)$ which implies that $\Omega_X$ is not ample. \hfill
  $\Diamond$
\end{example}

More significantly, we next exhibit an ample toric vector bundle on a smooth
toric variety that is not globally generated.  In particular, this supersedes
Examples~4.15--4.17 in \cite{HMP} and answers the second part of Question~7.5
in \cite{HMP}.

\begin{example}
  \label{exa:notBPF2}
  Consider the toric vector bundle $\cF$ on $\PP^2$ appearing in
  Example~\ref{exa:notBPF}.  Having already established that $\cF$ is not
  globally generated, it remains to show that $\cF$ is ample.  Let $C_k$
  denote the $T$-invariant curve in $\PP^2$ corresponding to the cone
  $\tau_{i,j} := \sigma_i \cap \sigma_j \in \Sigma(d-1)$ where
  $\{i,j,k\} = \{1,2,3\}$.  From the line segments in
  Figure~\ref{fig:notSpanned} joining diamonds to squares, we see that the
  characters in $\bm{u}(\sigma_2)$ and $\bm{u}(\sigma_3)$ are paired on $C_1$
  as $\bigl( (-1,3), (-1,-1) \bigr)$, $\bigl( (0,0), (0,-3) \bigr)$,
  $\bigl( (4,-2), (4,-3) \bigr)$, so we obtain
  \[
  \cF|_{C_1} = \cO_{\PP^1}(3D_1 + D_2) \oplus \cO_{\PP^1}(3D_2) \oplus
  \cO_{\PP^1}(-2D_1 + 3D_2) \cong \cO_{\PP^1}(4) \oplus \cO_{\PP^1}(3) \oplus
  \cO_{\PP^1}(1) \, .
  \]  
  Similar calculations give
  $\cF|_{C_2} \cong \cO_{\PP^1}(5) \oplus \cO_{\PP^1}(2) \oplus
  \cO_{\PP^1}(1)$ and
  $\cF|_{C_3} \cong \cO_{\PP^1}(6) \oplus \cO_{\PP^1}(1) \oplus
  \cO_{\PP^1}(1)$.  Since the restriction to every $T$-invariant curve is
  ample, the toric vector bundle $\cF$ is ample. \hfill $\Diamond$
\end{example}

The vector bundle $\cF$ has minimal rank among all ample toric vector bundles
on $\PP^2$ that are not globally generated.  More than that, the ensuing
proposition proves that, for low-rank toric vector bundles on $\PP^d$, nef is
equivalent to globally generated.

\begin{proposition} 
  \label{pro:lowRank}
  If $\cE$ is a toric vector bundle on $\PP^d$ with rank at most $d$, then
  $\cE$ is globally generated if and only if it is nef.
\end{proposition}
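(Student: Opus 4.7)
The forward direction is standard: a globally generated vector bundle is always nef, since $\sO_{\PP(\sE)}(1)$ inherits global generation, and a globally generated line bundle is nef.

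For the converse, I plan to invoke Theorem~\ref{thm:main1}. Assume $\sE$ is nef with $r := \rank \sE \leq d$. Fix a maximal cone $\sigma \in \Sigma(d)$ and a character $\uu_\ell \in \bm{u}(\sigma)$; I must verify that $\uu_\ell \in P_{\ee_{\ell,\sigma}}$. Since $\PP^d$ has $d+1$ rays while $\sigma$ contains $d$ of them, Lemma~\ref{lem:function} already supplies the required equality $\langle \uu_\ell, \vv_i \rangle = \max\bigl(j : \ee_{\ell,\sigma} \in E^{\vv_i}(j)\bigr)$ for every ray $\vv_i \in \sigma(1)$, leaving only the single inequality
\[
\langle \uu_\ell, \vv \rangle \leq \max\bigl(j \in \ZZ : \ee_{\ell,\sigma} \in E^{\vv}(j)\bigr)
\]
to check, where $\vv$ is the unique ray not contained in $\sigma$.

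To bound the right-hand side, the plan is to use the pairings from \S\ref{sub:restrict} together with Theorem~2.1 of \cite{HMP}. For each facet $\tau \prec \sigma$, the adjacent maximal cone $\sigma'$ contains $\vv$, and nefness forces every pair $(\uu_i, \uu_i')$ in the canonical pairing between $\bm{u}(\sigma)$ and $\bm{u}(\sigma')$ to satisfy $\uu_i - \uu_i' \in \RR_{\geq 0}\, w_\tau$, where $w_\tau$ denotes the primitive generator of $\tau^\perp$ that is positive on $\sigma$. Expanding $\ee_{\ell,\sigma}$ in the equivariant splitting $E = \bigoplus_{\uu' \in \bm{u}(\sigma')} L_{\uu'}$ at $\sigma'$ yields
\[
\max\bigl(j : \ee_{\ell,\sigma} \in E^{\vv}(j)\bigr) = \min\bigl\{\langle \uu', \vv \rangle : L_{\uu'} \text{ appears nontrivially in } \ee_{\ell,\sigma}\bigr\}.
\]
Using the linear relation $\vv = -\sum_{\vv_i \in \sigma(1)} \vv_i$, which holds since the $d+1$ rays of the fan of $\PP^d$ sum to zero, the pairing inequality $\langle \uu_\ell - \uu_\ell', w_\tau \rangle \geq 0$ converts directly into $\langle \uu_\ell', \vv \rangle \geq \langle \uu_\ell, \vv \rangle$. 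This handles the case in which $\ee_{\ell,\sigma}$ lies entirely in the single summand $L_{\uu_\ell'}$ paired with $\uu_\ell$.

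The main obstacle is the remaining case, where $\ee_{\ell,\sigma}$ has nonzero components in several summands $L_{\uu'}$ at $\sigma'$; this occurs precisely when multiple characters in $\bm{u}(\sigma)$ have the same image in $M / \tau^\perp$, so that the pairing across $C_\tau$ is not canonical. To rule out a bad $\uu'$, one must establish the analogous inequality $\langle \uu',\vv \rangle \geq \langle \uu_\ell, \vv \rangle$ for \emph{every} appearing summand, including those paired with characters different from $\uu_\ell$. The rank bound $r \leq d$ enters here: because each splitting has at most $r \leq d$ summands while $\sigma$ has $d$ facets, one can apply the pairing argument along every facet $\tau \prec \sigma$ simultaneously and, by a pigeonhole-style combinatorial comparison of the resulting inequalities, choose compatible pairings across all facets so that every summand appearing in $\ee_{\ell,\sigma}$ dominates $\uu_\ell$ in the $\vv$-direction. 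Example~\ref{exa:notBPF2} shows that without the hypothesis $r \leq d$ the equivalence breaks down, which is consistent with the combinatorial bound being sharp.
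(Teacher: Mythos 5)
The forward direction is fine, but the converse has a genuine gap at exactly the point you yourself flag as ``the main obstacle.'' Your plan reduces to showing that, for the unique ray $\vv \notin \sigma$, every summand $L_{\uu'}$ of the equivariant splitting at an adjacent cone $\sigma'$ that appears nontrivially in the expansion of $\ee_{\ell,\sigma}$ satisfies $\langle \uu', \vv\rangle \geq \langle \uu_\ell, \vv\rangle$. The ``pigeonhole-style combinatorial comparison'' that is supposed to deliver this from the hypothesis $\rk\sE \leq d$ is never actually described, and it is not clear that it exists: nefness along the curve $C_\tau$ only constrains the multiset pairing between $\bm{u}(\sigma)$ and $\bm{u}(\sigma')$, not which summands $L_{\uu'}$ can occur in a particular basis vector $\ee_{\ell,\sigma}$, and the line sub-bundles realizing the splitting of $\sE|_{C_\tau}$ are not canonical when characters collide modulo $\tau^\perp$. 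As written, the argument is only complete in the case where $\ee_{\ell,\sigma}$ lies in a single summand at every adjacent cone, which essentially forces $\sE$ to split; the hard case is asserted, not proved.

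The paper's proof takes an entirely different route that sidesteps this combinatorics: it invokes the classification of low-rank toric vector bundles on $\PP^d$. If $\rk \sE < d$, then $\sE$ splits equivariantly into line bundles (Kaneyama, Klyachko), and for toric line bundles nef is equivalent to globally generated. If $\rk\sE = d$ and $\sE$ is indecomposable, then $\sE \cong \sQ(\ell)$ or $\sQ^*(\ell)$ for the Euler-type bundle $\sQ$, and in either case the nefness inequalities obtained by restricting to torus-invariant curves exhibit $\sE$ as a quotient of a globally generated direct sum of line bundles, hence globally generated. This is where the hypothesis $\rk\sE \leq d$ genuinely enters --- through the classification, not through a count of facets versus summands. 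To salvage your approach you would need to supply the missing combinatorial lemma in full detail (and explain precisely why it fails for $\sF$ in Example~\ref{exa:notBPF}), or else fall back on the classification as the paper does.
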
 

\begin{proof}  
  As follows from Example~1.4.5 in \cite{PAGI}, every globally generated
  vector bundle is nef, so it suffices to prove the converse implication.
  Moreover, a line bundle on a complete toric variety is nef if and only if it
  is globally generated; see Theorem~6.3.13 in \cite{CLS}.  Hence, the
  proposition follows immediately when $\cE$ splits as a direct sum of line
  bundles.  If the rank of $\cE$ is less than $d$, then Corollary~3.5 in
  \cite{Kan} or Corollary~6.1.5 in \cite{Kly} imply that $\cE$ splits into a
  direct sum of line bundles.  Therefore, we may assume that $\cE$ is
  indecomposable and has rank equal to $d$.

  Under these hypotheses, Theorem~4.6 in \cite{Kan} establishes that $\cE$ is
  isomorphic to either $\cQ(\ell)$ or $\cQ^*(\ell)$ for some $\ell \in \ZZ$,
  where $\cQ$ is defined by the short exact sequence
  \[
  0 \longrightarrow \cO_{\PP^d} \xrightarrow{\;\; \left[ 
      \begin{smallmatrix}
        y_1^{a_1} & y_2^{a_{2}} & \dotsb & y_{d+1}^{a_{d+1}}
      \end{smallmatrix} \right] \;\;} 
  \bigoplus_{k=1}^{d+1} \cO_{\PP^d}(a_k D_k) \longrightarrow \cQ
  \longrightarrow 0 \, ,
  \]
  $a_1, a_2, \dotsc, a_{d+1}$ are positive integers, and
  $D_1, D_2, \dotsc, D_{d+1}$ are the $T$-invariant divisors on $\PP^d$.
  Using the notation from Example~\ref{exa:ampleTangent}, let $C_{i,j}$ denote
  the $T$-invariant curve corresponding to the cone
  $\tau_{i,j} = \sigma_i \cap \sigma_j \in \Sigma(d-1)$ where
  $1 \leq i < j \leq d+1$.  Restricting the short exact sequence to the curve
  $C_{i,j}$, we obtain
  $\cQ|_{C_{i,j}} \cong \cO_{\PP^1}(a_i + a_j) \oplus \bigl( \bigoplus_{k = 1,
    k \neq i,j}^{d+1} \cO_{\PP^1}(a_k) \bigr)$.
  
  If $\cE = \cQ(\ell)$ and $\cE$ is nef, then we have $a_k + \ell \geq 0$ for
  all $1 \leq k \leq d+1$ which means that the vector bundle
  $\mathcal{S} := \bigoplus_{k=1}^{d+1} \cO_{\PP^d}(a_k + \ell)$ is globally
  generated.  Since $\cE$ is a quotient of $\mathcal{S}$, we conclude that
  $\cE$ is also globally generated; see Example~6.1.4 in \cite{PAG2}.  If
  $\cE = \cQ^*(\ell)$ and $\cE$ is nef, then we have $\ell - a_k \geq 0$ for
  all $1 \leq k \leq d+1$ and $\ell - a_i - a_j \geq 0$ for all
  $1 \leq i < j \leq d+1$.  The functorial properties of the dual imply that
  $\cQ^*(\ell) \hookrightarrow \bigoplus_{k = 1}^{d+1} \cO_{\PP^d}(\ell -
  a_k)$
  and
  $\cQ^*(\ell) \cong \bigl( \bigwedge^{d-1} \cQ^*(\ell) \bigr)^* \otimes \det
  \bigl( \cQ^*(\ell) \bigr)$.
  It follows that $\cE$ is a quotient of the vector bundle
  $\mathcal{S}' := \bigl( \bigwedge^{d-1} \bigl( \bigoplus_{k=1}^{d+1}
  \cO_{\PP^d}(\ell-a_k) \bigr) \bigr)^* \otimes \det \bigl( \cQ^*(\ell)
  \bigr)$.
  Since
  $\bigwedge^{d-1} \bigl( \bigoplus_{k=1}^{d+1} \cO_{\PP^d}(\ell-a_k) \bigr)
  \cong \bigoplus_{1 \leq k_1 < k_2 < \dotsb < k_{d-1} \leq d+1}
  \cO_{\PP^d}\bigl( (d-1)\ell - a_{k_1} - a_{k_2} - \dotsb - a_{k_{d-1}}
  \bigr)$
  and
  $\det \bigl( \cQ^*(\ell) \bigr) \cong \cO_{\PP^d}( d \ell - a_1 - a_2 -
  \dotsb - a_{d+1})$,
  we see that $\mathcal{S}'$ is a direct sum of line bundles of the form
  $\cO_{\PP^d}(\ell - a_j - a_k)$ which implies that both $\mathcal{S}'$ and
  $\cE$ are globally generated.
\end{proof}

To complement Examples~4.9--4.10 in \cite{HMP}, we end this section by
illustrating that the higher cohomology groups of a globally-generated ample
toric vector bundle on a smooth toric variety may be nonzero.

\begin{example} 
  \label{exa:nonvanishing2}
  Consider the globally-generated toric vector bundle $\cG$ appearing in
  Example~\ref{exa:nonvanishing}.  Restricting to the $T$-invariant curves
  gives $\cG|_{C_1} \cong \cO_{\PP^1}(5) \oplus \cO_{\PP^1}(2)$,
  $\cG|_{C_2} \cong \cO_{\PP^1}(1) \oplus \cO_{\PP^1}(1)$,
  $\cG|_{C_3} \cong \cO_{\PP^1}(6) \oplus \cO_{\PP^1}(1)$,
  $\cG|_{C_4} \cong \cO_{\PP^1}(8) \oplus \cO_{\PP^1}(1)$, and shows that
  $\cG$ is ample.  Furthermore, Theorem~4.2.1 in \cite{Kly} establishes that
  the $T$-equivariant Euler characteristic of $\cG$ is
  \begin{align*}
    \chi(\cG) &= \sum_i (-1)^i \dim H^i(X, \cG)_\uu \cdot t^{\uu} =
    \tfrac{t_1^{-2}t_2^{2} + t_1^{4}t_2^3}{(1 - t_1^{})(1 - t_2^{})} +
    \tfrac{t_1^{-3}t_2^{2} + t_1^{3}t_2^3}{(1 - t_1^{})(1 - t_1^{-1}t_2^{-1})}
    + \tfrac{t_1^{-4}t_2^{} + t_1^{-3}t_2^{-3}}{(1 - t_1^{})(1 -t_1^{}t_2^{})}
    + \tfrac{t_1^{-2}t_2^{-3} + t_1^{4}t_2^{}}{(1 - t_1^{-1})(1 - t_2^{})} \\
    &= t_1^{4}t_2^{3} + t_1^{4}t_2^{2} + t_1^{4}t_2^{} + t_1^{3}t_2^{3} +
    t_1^{3}t_2^{2} + t_1^{3}t_2^{} + t_1^{2}t_2^{2} + t_1^{2}t_2^{} +
    t_1^{}t_2^{} \\
    &\relphantom{==}- t_1^{-1} + t_1^{-2}t_2^{2} + t_1^{-2}t_2^{} +
    t_1^{-2}t_2^{-2} + t_1^{-2}t_2^{-3} + t_1^{-3}t_2^{2} + t_1^{-3}t_2^{} +
    t_1^{-3}t_2^{-3} + t_1^{-4}t_2^{} \, ,
  \end{align*}
  so we have $H^1(X, \cG)_{(-1,0)} \neq 0$.  Using Theorem~4.1.1 in
  \cite{Kly}, a longer calculation confirms that $H^1(X,\cG)_{\uu} \cong \CC$
  when $\uu = (-1,0)$ and $H^1(X,\cG)_{\uu} = 0$ when $\uu \neq (-1,0)$.  In
  Figure~\ref{fig:nonvanishing}, the triangle represents the unique character
  for which the higher cohomology groups do not vanish. \hfill $\Diamond$
\end{example}

\begin{remark}
  \label{rem:nonvanishing}
  Using the techniques from Example~\ref{exa:nonvanishing2} or Example~4.3.5
  in \cite{Kly}, we see that $H^1(\PP^2, \cF)_\uu \neq 0$ where $\uu = (1,-1)$
  and $\cF$ is the toric vector bundle appearing in Example~\ref{exa:notBPF}.
  In Figure~\ref{fig:notSpanned}, the triangle represents the unique character
  for which the higher cohomology groups do not vanish.
\end{remark}

\section{Higher-Order Jets}
\label{sec:jets}

\noindent
This final section relates positivity of higher-order jets to properties of
the associated parliament of polytopes.  In particular, we determine which
results for jets of line bundles on smooth toric varieties extend to
higher-rank toric vector bundles.  For toric vector bundles, we also provide
an explicit polyhedral characterization for very ampleness.

Fix $\ell \in \NN$.  A vector bundle $\cE$ \emph{separates $\ell$-jets} if, for
every closed point $x \in X$ with maximal ideal
$\mathfrak{m}_x \subseteq \cO_X$, the map
$J^\ell_x \colon H^0(X, \cE) \to H^0(X, \cE \otimes_{\cO_{X}}
\cO_X/\mathfrak{m}_x^{\ell+1})$, which evaluates a global section and its
derivatives of order at most $\ell$ at $x$, is surjective; compare with
Definition~5.1.15 in \cite{PAGI}.  When $X$ is a toric variety, this map is
$T$-equivariant, because differentiation is $\CC$-linear.  As a special case,
we see that a vector bundle separates $0$-jets if and only if it is globally
generated.  A vector bundle that separates $\ell$-jets is also called
\emph{$\ell$-jet spanned}.

As a stronger attribute, we say that a vector bundle $\cE$ is \emph{$\ell$-jet
  ample} if, for all distinct closed points $x_1, x_2, \dotsc, x_t \in X$ and
for all positive integers $\ell_1, \ell_2, \dotsc, \ell_t$ satisfying
$\sum_{i=1}^t \ell_i = \ell+1$, the natural map
$\psi \colon H^0(X, \cE) \to H^0 \bigl( X, \cE \otimes_{\cO_{X}}
\cO_X/(\mathfrak{m}_{x_1}^{\ell_1} \cdot \mathfrak{m}_{x_2}^{\ell_2} \dotsb
\mathfrak{m}_{x_t}^{\ell_t}) \bigr) = \bigoplus_{i=1}^t H^0 (X, \cE
\otimes_{\cO_X} \cO_X / \mathfrak{m}_{x_i}^{\ell_i})$ is surjective.  Hence, a
$\ell$-jet ample vector bundle does separate $\ell$-jets, and a vector bundle
separates $0$-jets if and only if it is $0$-jet ample.  Proposition~4.2 in
\cite{BDRS} proves that every $1$-jet ample vector bundle on a smooth
projective variety is very ample, and Example~4.3 in \cite{BDRS} shows that
the converse does not always hold.  If $0 \leq m \leq \ell$, then a vector
bundle that separates $\ell$-jets also separates $m$-jets, and a vector bundle
that is $\ell$-jet ample is also $m$-jet ample.

We start by placing ampleness into this hierarchy of positivity properties on
a smooth toric variety.

\begin{lemma}
  \label{lem:ample}
  Every toric vector bundle that separates $1$-jets is ample.
\end{lemma}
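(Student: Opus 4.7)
The plan is to combine the numerical criterion of \cite{HMP} (a toric vector bundle is ample iff its restriction to every torus-invariant curve is ample) with a restriction-to-curves argument for the separation of $1$-jets.

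First, I would fix an arbitrary torus-invariant curve $C \subseteq X$. By the setup in \S\ref{sub:restrict}, $C \cong \PP^1$ and the restriction splits equivariantly as $\sE|_C \cong \bigoplus_{i=1}^{r} \sO_{\PP^1}(a_i)$. The goal is to show $a_i \geq 1$ for every $i$. To this end, pick any closed point $x \in C$. Since the hypothesis says $\sE$ separates $1$-jets on $X$, the map $H^0(X,\sE) \to H^0 \bigl(X, \sE \otimes_{\sO_X} \sO_X/\mathfrak{m}_x^2 \bigr)$ is surjective. The surjection $\sO_X/\mathfrak{m}_x^2 \twoheadrightarrow \sO_C/\mathfrak{m}_{C,x}^2$ from the closed embedding $C \into X$ induces a surjection of coherent sheaves $\sE \otimes \sO_X/\mathfrak{m}_x^2 \twoheadrightarrow \sE|_C \otimes \sO_C/\mathfrak{m}_{C,x}^2$, so composing yields that $H^0(X, \sE) \to H^0 \bigl( C, \sE|_C \otimes \sO_C/\mathfrak{m}_{C,x}^2 \bigr)$ is surjective. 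Since this composite factors through the restriction map $H^0(X,\sE) \to H^0(C,\sE|_C)$, the evaluation map $H^0(C, \sE|_C) \to H^0 \bigl( C, \sE|_C \otimes \sO_C/\mathfrak{m}_{C,x}^2 \bigr)$ is already surjective. In other words, $\sE|_C$ separates $1$-jets at $x$.

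Next, I would exploit the splitting on $\PP^1$. Because $H^0$ commutes with finite direct sums and the $1$-jet functor is additive, the $1$-jet separation of $\sE|_C = \bigoplus_{i=1}^r \sO_{\PP^1}(a_i)$ at $x$ is equivalent to the $1$-jet separation of each line-bundle summand $\sO_{\PP^1}(a_i)$ at $x$. On $\PP^1$, the line bundle $\sO_{\PP^1}(a)$ separates $1$-jets at any point if and only if $a \geq 1$: for $a \leq 0$ the space $H^0 \bigl( \PP^1, \sO_{\PP^1}(a) \bigr)$ has dimension at most one while $\sO_{\PP^1}(a) \otimes \sO_{\PP^1}/\mathfrak{m}_x^2$ has dimension two (for $a = 0$) or zero (for $a<0$, where no nonzero section exists to separate), so the evaluation cannot be surjective.

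Consequently, $a_i \geq 1$ for every $i$, which means $\sE|_C$ is ample on the torus-invariant curve $C$. As the choice of $C$ was arbitrary, Theorem~2.1 in \cite{HMP} immediately yields that $\sE$ itself is ample, completing the proof. The only subtlety is verifying that the restriction map $H^0(X,\sE) \to H^0(C, \sE|_C)$ transmits the $1$-jet surjectivity; the commutative square involving $\sO_X/\mathfrak{m}_x^2 \twoheadrightarrow \sO_C/\mathfrak{m}_{C,x}^2$ handles this cleanly, and no additional input beyond the HMP criterion is required.
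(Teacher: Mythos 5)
Your proof is correct and follows essentially the same route as the paper's: restrict to each torus-invariant curve $C$, note that $\sE|_C$ still separates $1$-jets and splits equivariantly into line bundles, deduce that each summand $\sO_{\PP^1}(a_i)$ has $a_i \geq 1$, and conclude by Theorem~2.1 of \cite{HMP}. The only difference is that you spell out the (correct) commutative-diagram argument showing the restriction inherits $1$-jet separation and replace the paper's citation of Theorem~4.2 in \cite{DiRocco} with a direct dimension count on $\PP^1$; both added details are sound.
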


\begin{proof}  
  Let $\cE$ be a toric vector bundle that separates $1$-jets.  For any
  $T$-invariant curve $C$, the restriction $\cE|_C$ separates $1$-jets and
  splits $T$-equivariantly into sum of line bundles.  For a line bundle on a
  toric variety, Theorem~4.2 in \cite{DiRocco} shows that separating $1$-jets
  is equivalent to being ample.  Hence, if
  $\cE|_C \cong \cO_{\PP^1}(a_1) \oplus \cO_{\PP^1}(a_2) \oplus \cdots \oplus
  \cO_{\PP^1}(a_r)$, then each line bundle $\cO_{\PP^1}(a_i)$ is ample.
  Therefore, the restriction to every $T$-invariant curve is ample, which
  ensures that $\cE$ is ample; see Theorem~2.1 in \cite{HMP}.
\end{proof}

We next characterize the toric vector bundles that separate $\ell$-jets by
enhancing Theorem~\ref{thm:main1}.

\begin{theorem}
  \label{thm:kjet}
  A toric vector bundle $\cE$ separates $\ell$-jets, for $\ell \geq 1$, if and only
  if, for all maximal cones $\sigma \in \Sigma(d)$, the following conditions
  hold:
  \begin{enumerate}[\upshape (i)]
  \item the associated characters $\bm{u}(\sigma)$ are vertices of polytopes
    in the parliament for $\cE$,
  \item the edges adjacent to these vertices correspond to the generators of
    the dual cone $\sigma^\vee$,
  \item the edges adjacent to these vertices have normalized length at least
    $\ell$,  and
  \item the elements indexing these polytopes form a basis in the matroid
    $\Mat(\cE)$.
  \end{enumerate}
\end{theorem}

\noindent 
If we ignore the conditions on the edges, then we recover
Theorem~\ref{thm:main1} which characterizes toric vector bundles that separate
$0$-jets.

\begin{proof}
  The locus in the toric variety $X$, on which
  $H^0(X,\cE) \to H^0(X, \cE \otimes_{\cO_X} \cO_X / \mathfrak{m}_x^{\ell+1})$
  is not surjective, is closed and $T$-invariant.  Since $X$ is complete, it
  follows that $\cE$ separates $\ell$-jets if and only if it separates
  $\ell$-jets at the $T$-fixed points.

  Fix a maximal cone
  $\sigma = \pos(\vv_1, \vv_2, \dotsc, \vv_d) \in \Sigma(d)$, let
  $\underline{\bm{u}}(\sigma) = \{ \uu_{\sigma,1}, \uu_{\sigma,2}, \dotsc,
  \uu_{\sigma,s} \}$ be the underlying set of associated characters, and let
  $\sB_{\sigma} = \sB_{\sigma,1} \sqcup \sB_{\sigma,2} \sqcup \dotsb \sqcup
  \sB_{\sigma,s}$ be the corresponding partition of a compatible basis in the
  matroid $\Mat(\cE)$; see Section~\ref{sec:basepointfree}. The vector bundle
  $\cE$ separates $\ell$-jets at the $T$-fixed point $x_{\sigma}$ if and only if
  the natural map
  \[
    J^\ell_{x_\sigma} \colon H^0(X,\cE) \to H^0( X, \cE \otimes
    \cO_X/\mathfrak{m}_{x_{\sigma}}^{\ell+1}) \cong \Span(\sB_{\sigma})
    \otimes_{\CC} \CC^{\binom{\ell+d}{d}}
  \]
  is surjective, where the standard basis for the vector space
  $\CC^{\binom{\ell+d}{d}}$ corresponds to the partial derivatives of order
  less than $\ell$. Since a $T$-equivariant global section
  $\ee' \otimes \chi^{-\uu}$ is given in local coordinates near $x_\sigma$ by
  \eqref{eq:local}, the map $J^\ell_{x_{\sigma}}$ sends
  $\ee' \otimes \chi^{-\uu}$ to the first $\ell$ terms of the Taylor expansion
  about $x_{\sigma}$.  Hence, for
  $\mathbf{m} = (m_1, m_2, \dotsc, m_d) \in \NN^d$ satisfying
  $m_1 + m_2 + \dotsb + m_d \leq \ell$, the $\mathbf{m}$-th component of
  $J^\ell_{x_{\sigma}}(\ee' \otimes \chi^{- \uu})$ is given in local
  coordinates by
  \[
    \sum_{\ee \in \sB_\sigma} \lambda_{\ee} \Biggl( \ee \otimes \frac{1}{m_1!
      m_2! \dotsb m_d!}  \frac{\partial^{m_1 + m_2 + \dotsb +
        m_d}}{\partial^{m_1}y_1 \partial^{m_2}y_2 \dotsb \partial^{m_d}y_d}
    \biggl( \prod_{i = 1}^{d} y_i^{- \langle \uu, \vv_i \rangle +
      \varphi_{\ell, \sigma}(\vv_i)} \biggr)\Biggr) \Bigg|_{y_1 = y_2 = \dotsb
      = y_d = 0} \, .
  \]
  The $\ee$-th summand in this expression has neither a zero nor a pole at
  $(y_1, y_2, \dotsc, y_d) = (0,0, \dotsc, 0)$ if and only if we have
  $- \langle \uu, \vv_i \rangle + \varphi_{\ee}(\vv_i) = m_i$ for all
  $1 \leq i \leq d$.  By Lemma~\ref{lem:function}, it follows that there
  exists an index $k$ such that $\ee \in \sB_{\sigma, k}$ and
  $\uu = \uu_{\sigma,k} + \mathbf{m}$.  In this case, the lattice point
  $\uu_{\sigma,k} - \sum_{i=1}^{d} m_i \ww_i$, where
  $\ww_1, \ww_2, \dotsc, \ww_d$ are the unique minimal generators of the dual
  cone $\sigma^\vee$, belongs to the polytope $P_\ee$ in the parliament for
  $\cE$.  Hence, the $\mathbf{m}$-th component of
  $J^\ell_{x_{\sigma}}(\ee \otimes \chi^{- \uu})$ is nonzero if and only if
  the global section includes summands of the form
  $\sum_{\ee \in \sB_{\sigma, k}} \lambda_{\ee} (\ee \otimes
  \chi^{-\uu_{\sigma, k}- \mathbf{m}})$, which map to
  $\sum_{\ee \in \sB_{\sigma, k}} \lambda_\ee \, \ee$.  For the map
  $J^\ell_{x_{\sigma}}$ to be surjective, we need each vector
  $\ee \in \sB_\sigma$ to appear in each component of factor
  $\CC^{\binom{\ell+d}{d}}$.  Therefore, the map $J^\ell_{x_{\sigma}}$ is
  surjective if and only if there exists a compatible basis $\sB_\sigma$ such
  that each
  $\ee \otimes \chi^{-\uu_{\sigma,k} - m_1 \ww_1 - m_2 \ww_2 - \dotsb - m_d
    \ww_d}$, for $1 \leq k \leq s$, $\ee \in \sB_{\sigma,k}$, and
  $\mathbf{m} \in \NN^d$ satisfying $m_1 + m_2 + \dotsb + m_d \leq \ell$, is a
  global section.  By convexity, this characterization is equivalent to
  requiring that the edges through the vertex $\uu_{\sigma,k}$ in the
  directions of dual vectors $\ww_i$ have normalized length at least $\ell$.
\end{proof}

With Theorem~\ref{thm:kjet}, we easily verify that the tangent bundle on
projective space separates $1$-jets.
    
\begin{example}
  As computed in Example~\ref{exa:tangentBundle}, the parliament of polytopes
  for the tangent bundle $\cT_{\PP^d}$ consists of
  $P_{\vv_i} = \conv( \mathbf{0}, \ww_i \!-\! \ww_1, \ww_i \!-\! \ww_2,
  \dotsc, \ww_i \!-\! \ww_{i-1}, \ww_i, \ww_i \!-\! \ww_{i+1}, \ww_i \!-\!
  \ww_{i+1}, \dotsc, \ww_i \!-\!  \ww_d)$ for $1 \leq i \leq d$, and
  $P_{\vv_{d+1}} = \conv( \mathbf{0}, - \ww_1, - \ww_2, \dotsc, - \ww_d)$.
  Hence, the associated characters are vertices of polytopes in the
  parliament, the edges in each polytope have normalized length $1$ and point
  in directions corresponding to generators of the dual cone, and the elements
  indexing these polytopes equal the unique choice of compatible basis.
  Therefore, the tangent bundle $\cT_{\PP^d}$ separates $1$-jets. \hfill
  $\Diamond$
\end{example}

Since Example~\ref{exa:notBPF} exhibits an ample toric vector bundle that is
not globally generated, the converse to Lemma~\ref{lem:ample} is false.  To
sharpen this distinction, we present an ample toric vector bundle that is
globally generated but does not separate $1$-jets.
  
\begin{example}
  \label{exa:span}
  Using the notation from Example~\ref{exa:notBPF} and
  Example~\ref{exa:notBPF2}, consider the toric vector bundle $\cH$ of rank
  $3$ on $\PP^2$ defined by the following decreasing filtrations:
  \begin{xalignat*}{2}
    E^{1}(j) &= 
    \left\{
      \renewcommand{\arraystretch}{0.9}
      \renewcommand{\arraycolsep}{1pt}
      \begin{array}{lcrcl}
        E & \;\; \text{if} \;\;  &   &j & \leq -2  \\
        \Span ( \ee_1, \ee_2 ) & \text{if} & -2 < & j & \leq -1 \\
        \Span ( \ee_1 ) & \text{if} & -1 < & j &\leq 2 \\
        0 & \text{if} &  2 < & j & 
      \end{array}
    \right. &  
    E^{3}(j) &= 
    \left\{
      \renewcommand{\arraystretch}{0.9}
      \renewcommand{\arraycolsep}{1pt}
      \begin{array}{lcrcl}
        E & \;\; \text{if} \;\; &  & j & \leq 1 \\
        \Span ( \ee_3-\ee_2, \ee_1 - \ee_2 ) & \text{if} & 1 < & j &
        \leq 3 \\
        \Span ( \ee_1 -\ee_2 ) & \text{if} & 3 < & j & \leq 4 \\
        0 & \text{if} & 4 < & j 
      \end{array}
    \right. \\
    E^{2}(j) &= 
    \left\{
      \renewcommand{\arraystretch}{0.9}
      \renewcommand{\arraycolsep}{1pt}
      \begin{array}{lcrcl}
        E & \;\; \text{if} \;\; & & j &\leq -2 \\
        \Span ( \ee_2, \ee_3 ) & \text{if} & -2 < &j &\leq 0 \\
        \Span ( \ee_3 ) & \text{if} & 0 < & j & \leq 2 \\
        0 & \text{if} & 2 < & j & \, .
      \end{array}
    \right.
  \end{xalignat*}
  It follows that the ground set of the matroid $\Mat(\cH)$ is
  $\{ \ee_1, \ee_1 - \ee_2, \ee_2, \ee_2 - \ee_3, \ee_3 \}$. On each maximal
  cone, the associated characters and the unique choice of compatible bases
  are
  \begin{xalignat*}{2}
    \bm{u}(\sigma_1) &= \{ (-2,-2), (-3,0), (-3,2) \} \, , & 
    \sB_{\sigma_1} &= \{ \ee_1-\ee_2, \ee_2-\ee_3, \ee_3 \} \, ,\\
    \bm{u}(\sigma_2) &= \{ (2,-3), (-1,-3), (-2,-1) \} \, , & 
    \sB_{\sigma_2} &= \{\ee_1, \ee_1-\ee_2, \ee_2-\ee_3 \} \, , \\
    \bm{u}(\sigma_3) &= \{ (2,-2), (-1,0), (-2,2) \} \, , &
    \sB_{\sigma_3} &= \{ \ee_1, \ee_2,  \ee_3 \} \, , 
  \end{xalignat*}
  so the convex polytopes in the parliament for $\cH$ are
  \begin{xalignat*}{2}
    P_{\ee_1} &= \conv \bigl( (1,-2), (2,-2), (2,-3) \bigr) \, , &
    P_{\ee_2 - \ee_3} &= \conv \bigl( (-3,0), (-2,0), (-2,-1) \bigr) \, ,\\
    P_{\ee_1 - \ee_2} &= \conv \bigl( (-2,-2), (-1,-2), (-1,-3) \bigr) \, , &
    P_{\ee_3} &= \conv \bigl( (-3,2), (-2,2), (-2,1) \bigr) \, , \\
    P_{\ee_2} &= \conv \bigl( (-1,0) \bigr) \, .
  \end{xalignat*}
  In Figure~\ref{fig:span}, the associated characters are represented by
  \begin{figure}[t]
    \begin{tikzpicture}[x=0.75cm, y=0.75cm, line width=1.25pt]
      \foreach \x in {-4,-3,-2,-1,1,2,3,4} {
        \draw[color=white!40!black] (\x, 2pt) -- (\x, -2pt);}
      \foreach \y in {-3,-2,-1,1,2} {
        \draw[color=white!40!black] (2pt, \y) -- (-2pt, \y);}
      \draw[color=white!40!black, <->] (-4.4, 0) -- (4.4,0) {};
      \draw[color=white!40!black, <->] (0, -3.4) -- (0, 2.4) {};
      \draw[color=blue, fill=blue, opacity=0.5] (2,-2) -- (1,-2) -- (2,-3) --
      (2,-2) -- cycle;
      \draw[color=blue, fill=blue, opacity=0.5] (-2,-2) -- (-1,-3) -- (-1,-2)
      -- (-2,-2) -- cycle;
      \draw[color=blue, fill=blue, opacity=0.5] (-3,2) -- (-2,2) -- (-2,1) --
      (-3,2) -- cycle;
      \draw[color=blue, fill=blue, opacity=0.5] (-3,0) -- (-2,0) -- (-2,-1) --
      (-3,0) -- cycle;
      \draw[color=blue] (2,-2) -- (-2,-2) -- (-1,-3) -- (-1,0) -- (-3,0) --
      (-2,-1) -- (-2,2) -- (-3,2) -- (2,-3) -- (2,-2) -- cycle;
      \node[circle, fill=black, inner sep=2.0pt] () at (-2,1) {};
      \node[circle, fill=black, inner sep=2.0pt] () at (-2,0) {};
      \node[circle, fill=black, inner sep=2.0pt] () at (-1,-2) {};
      \node[circle, fill=black, inner sep=2.0pt] () at (1,-2) {};
      \node[rectangle, fill=black, inner sep=2.5pt] () at (2,-2) {};
      \node[rectangle, fill=black, inner sep=2.5pt] () at (-1,0) {};
      \node[rectangle, fill=black, inner sep=2.5pt] () at (-2,2) {};
      \node[star, star points=5, star point ratio = 0.3, fill=black, 
      inner sep=3.5pt] () at (-2,-2) {};
      \node[star, star points=5, star point ratio = 0.3, fill=black, 
      inner sep=3.5pt] () at (-3,0) {};
      \node[star, star points=5, star point ratio = 0.3, fill=black, 
      inner sep=3.5pt] () at (-3,2) {};
      \node[diamond, fill=black, inner sep=2.0pt] () at (2,-3) {};
      \node[diamond, fill=black, inner sep=2.0pt] () at (-2,-1) {};
      \node[diamond, fill=black, inner sep=2.0pt] () at (-1,-3) {};
      \node[color=black] () at (2.5,-2.6) {$P_{\ee_1}$};
      \node[color=black] () at (-2.5,2.4) {$P_{\ee_3}$};
      \node[color=black] () at (-3,-0.6) {$P_{\ee_3 - \ee_2}$};
      \node[color=black] () at (-2,-2.6) {$P_{\ee_1 - \ee_2}$};
      \node[color=black] () at (-0.8,0.5) {$P_{\ee_2}$};
    \end{tikzpicture}
    \caption{The parliament of polytopes for $\cH$}
    \label{fig:span}
  \end{figure}
  asterisks, diamonds, and squares respectively.  The polytopes are
  represented by shaded regions and the other lattices points lying in the
  polytopes are represented by circles.  Using Theorem~\ref{thm:main1}, we see
  that $\cH$ is globally generated.  In contrast, Theorem~\ref{thm:kjet}
  implies that $\cH$ does not separate $1$-jets because $P_{\ee_2}$ is simply
  a point.  Lastly, restricting to the $T$-invariant curves gives
  $\cH|_{C_1} \cong \cO_{\PP^1}(3) \oplus \cO_{\PP^1}(3) \oplus
  \cO_{\PP^1}(1)$,
  $\cH|_{C_2} \cong \cO_{\PP^1}(4) \oplus \cO_{\PP^1}(2) \oplus
  \cO_{\PP^1}(1)$, and
  $\cH|_{C_3} \cong \cO_{\PP^1}(5) \oplus \cO_{\PP^1}(1) \oplus
  \cO_{\PP^1}(1)$, so the toric vector bundle is ample.  \hfill $\Diamond$
\end{example}

On a smooth projective variety, being $1$-jet ample is generally a stronger
condition than separating $1$-jets, as Example~2.3 in \cite{LM} and
Example~4.6 in \cite{Langer} demonstrate for line bundles.  For line bundles
on a smooth complete toric variety, these conditions are equivalent; see
\cite{DiRocco}.  Extending this result, we prove that these conditions are
equivalent for toric vector bundles on a smooth complete toric variety.

\begin{theorem}
  \label{thm:sepIffample}
  A toric vector bundle separates $\ell$-jets if and only if it is $\ell$-jet ample.
\end{theorem}

\begin{proof} 
  It suffices to show that every toric vector bundle $\cE$ which separates
  $\ell$-jets is $\ell$-jet ample. The locus in the toric variety
  $\prod_{i=1}^t X$, on which
  $H^0(X,\cE) \to \bigoplus_{i=1}^t H^0(X, \cE \otimes_{\cO_X} \cO_X /
  \mathfrak{m}_{x_{i}}^{\ell_i})$ is not surjective, is closed and
  $T$-invariant.  Since $X$ is complete, it follows that $\cE$ is $\ell$-jet
  ample if and only if it is $\ell$-jet ample at the $T$-fixed points.  Thus,
  it is enough to prove that, for all distinct $T$-fixed points
  $x_{\sigma_1}, x_{\sigma_2}, \dotsc, x_{\sigma_t}$ and all positive integers
  $\ell_1, \ell_2, \dotsc, \ell_t$ satisfying
  $\sum_{i=1}^{t} \ell_i = \ell+1$, the map
  $\psi \colon H^0(X, \cE) \to \bigoplus_{i=1}^t H^0(X, \cE \otimes_{\cO_X}
  \cO_X / \mathfrak{m}_{x_{\sigma_i}}^{\ell_i})$ is surjective.
  
  Since $\cE$ separates $\ell_i$-jets, for all $\ell_i\leq \ell,$ the map
  $\psi$ surjects onto each individual summand.  Consider a $T$-equivariant
  global section $\ee \otimes \chi^{-\uu}$ where the element $\ee$ belongs to
  the ground set of the matroid $\Mat(\cE)$ and
  $0 \neq J_{x_{\sigma_1}}^{\ell_1-1}(\ee \otimes \chi^{-\uu}) \in H^0(X, \cE
  \otimes_{\cO_X} \cO_X / \mathfrak{m}_{x_{\sigma_1}}^{\ell_1})$.  To prove
  that $\psi$ is surjective, it is enough to show that
  $J_{x_{\sigma_2}}^{\ell_2-1}(\ee \otimes \chi^{-\uu}) = 0$ because we may
  reindex the $T$-fixed points.  As in the proof of Theorem~\ref{thm:kjet},
  the hypothesis $J_{x_{\sigma_1}}^{\ell_1-1}(\ee \otimes \chi^{-\uu}) \neq 0$
  implies that the global section $\ee \otimes \chi^{-\uu}$ corresponds to the
  lattice $\uu \in P_\ee$ and the lattice distance from the vertex
  $\uu_{\sigma_1}$ of $P_\ee$ associated to the maximal cone $\sigma_1$ is at
  most $\ell_1-1$.  Similarly, if
  $J_{x_{\sigma_2}}^{\ell_2-1}(\ee \otimes \chi^{-\uu}) \neq 0$, then the
  lattice distance from $\uu \in P_\ee$ to the vertex $\uu_{\sigma_2}$ of
  $P_\ee$ associated to the maximal cone $\sigma_2$ would also be at most
  $\ell_2 - 1$.  As $\cE$ separates $\ell$-jets at $x_{\sigma_1}$,
  Theorem~\ref{thm:kjet} implies that the lattice length of each edge in
  $P_{\ee}$ emanating from the vertex corresponding to $\sigma_1$ is at least
  $\ell$.  Since $\ell_1 + \ell_2 -2 \leq \ell -1$, the convexity of $P_\ee$
  guarantees that the lattice point $\uu$ cannot be simultaneously this close
  to both $\uu_{\sigma_1}$ and $\uu_{\sigma_2}$.  Therefore, we conclude that
  $J_{x_{\sigma_2}}^{\ell_2-1}(\ee \otimes \chi^{-\uu}) = 0$ and $\psi$ is
  surjective.
\end{proof}

For a line bundle on a smooth toric variety, Theorem~4.2 in \cite{DiRocco}
establishes that separating $1$-jets is equivalent to being very ample.  As a
final result, we generalize this equivalence to higher-rank toric vector
bundles on a smooth toric variety.

\begin{theorem}
  \label{thm:sepIffvery}
  A toric vector bundle separates $1$-jets if and only if it is very ample.
\end{theorem}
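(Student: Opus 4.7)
The forward implication is immediate from prior results: if $\sE$ separates $1$-jets, then Theorem~\ref{thm:sepIffample} shows $\sE$ is $1$-jet ample, and Proposition~4.2 in \cite{BDRS} implies every $1$-jet ample vector bundle on a smooth projective variety is very ample.

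For the converse, assume $\sE$ is very ample, so $\sO_{\PP(\sE)}(1)$ is very ample on $\PP(\sE)$. The failure locus of separation of $1$-jets in $X$ is closed and $T$-invariant, so by completeness of $X$ we may restrict attention to the torus-fixed points $x_\sigma$, and by Corollary~\ref{cor:1jet} we need only show that every polytope $P_{\ee_{\ell,\sigma}}$ with $\ee_{\ell,\sigma} \in \sB_\sigma$ is $d$-dimensional. Fix $\sigma \in \Sigma(d)$ and $\ee_{\ell,\sigma} \in \sB_\sigma$. The equivariant splitting $\sE|_{U_\sigma} = \bigoplus_j \sO_X(\div \uu_j)|_{U_\sigma}$ distinguishes the line through $\ee_{\ell,\sigma}$ in $\sE_{x_\sigma}$ and hence a torus-fixed point $\tilde{x}_{\sigma,\ell} \in \PP(\sE)$ lying above $x_\sigma$. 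Since $\sO_{\PP(\sE)}(1)$ is very ample, it separates $1$-jets at $\tilde{x}_{\sigma,\ell}$, so by $T$-equivariance each weight component of the evaluation into $\mathfrak{m}_{\tilde{x}_{\sigma,\ell}} / \mathfrak{m}_{\tilde{x}_{\sigma,\ell}}^{2}$ is surjective. Under the identification $H^0(\PP(\sE), \sO_{\PP(\sE)}(1)) \cong H^0(X,\sE)$ and the local formula \eqref{eq:local}, the $d$ horizontal tangent directions inherited from $T_{x_\sigma} X$ force the existence of global sections $\ee_{\ell,\sigma} \otimes \chi^{-\uu}$ for each of the $d$ lattice points $\uu$ adjacent to $\uu_\ell$ along an edge of $P_{\ee_{\ell,\sigma}}$ at the vertex $\uu_\ell$; by Lemma~\ref{lem:function}, these edges are dual to the facets of $\sigma$. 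Combined with $\uu_\ell \in P_{\ee_{\ell,\sigma}}$, which holds by Theorem~\ref{thm:main1} because very ample implies globally generated, we obtain $d$ linearly independent edges of $P_{\ee_{\ell,\sigma}}$ emanating from $\uu_\ell$, whence $P_{\ee_{\ell,\sigma}}$ is $d$-dimensional.

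The main obstacle is the explicit local analysis on $\PP(\sE)$: one must match the $T$-weights of the base and fiber coordinates near $\tilde{x}_{\sigma,\ell}$ with the characters of those global sections of $\sE$ that contribute the corresponding Taylor coefficients, thereby extending the computation from the proof of Theorem~\ref{thm:kjet} from $X$ to its projectivization. A secondary subtlety arises when $\bm{u}(\sigma)$ has repeated characters, in which case the $T$-fixed locus in $\pi^{-1}(x_\sigma)$ is a disjoint union of positive-dimensional projective subspaces; in that situation $\tilde{x}_{\sigma,\ell}$ is chosen as a generic equivariant point on the appropriate component, and the horizontal analysis above proceeds unchanged.
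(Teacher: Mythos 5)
Your forward implication is fine and matches the paper's (implicit) use of Theorem~\ref{thm:sepIffample} together with Proposition~4.2 of \cite{BDRS}. For the converse, however, your route through $\PP(\sE)$ diverges from the paper's argument and contains a genuine gap at exactly the step you flag as ``the main obstacle.'' The paper instead blows up $X$ at $x_{\sigma_0}$, invokes Corollary~1 of \cite{BSS} to conclude that $\pi^*(\sE)\otimes_{\sO_{X'}}\sO_{X'}(-D_0)$ is globally generated, computes the Klyachko data of this bundle explicitly, and reads off from Theorem~\ref{thm:main1} that $\uu_\ell+\ww_i\in P_{\ee_{\ell,\sigma_0}}$ for all $i$; Theorem~\ref{thm:kjet} then finishes. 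That detour through \cite{BSS} is precisely what lets one avoid the local analysis on $\PP(\sE)$ that you defer.

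The gap in your argument is not merely that the computation is postponed; the conclusion you would extract from it is too weak. At the fixed point $\tilde{x}_{\sigma,\ell}$, a section $s=\sum_j s_j\,\ee_{j,\sigma}$ of $\sE$ becomes, in the standard chart of $\PP(\sE)$, the function $s_\ell(y)+\sum_{j\neq\ell}z_j\,s_j(y)$, so the horizontal part of its $1$-jet sees only $\partial s_\ell/\partial y_i$ at the origin. By $T$-equivariance, surjectivity onto the $dy_i$-component therefore yields only that the isotypical component $H^0(X,\sE)_{\uu}$, for the uniquely forced character $\uu$ adjacent to $\uu_\ell$ in direction $i$, contains \emph{some} vector whose $\ee_{\ell,\sigma}$-coordinate with respect to $\sB_\sigma$ is nonzero. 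What Corollary~\ref{cor:1jet} requires is the stronger statement that $\ee_{\ell,\sigma}$ itself lies in $H^0(X,\sE)_{\uu}=\bigcap_i E^{\vv_i}(\langle\uu,\vv_i\rangle)$, i.e.\ that $\uu\in P_{\ee_{\ell,\sigma}}$. These are not equivalent: the intersection defining $H^0(X,\sE)_\uu$ involves filtrations along rays outside $\sigma$, which need not be coordinate subspaces for $\sB_\sigma$. Example~\ref{exa:Ref} exhibits exactly this phenomenon, where $E^{\vv_1}(0)\cap E^{\vv_3}(0)=\Span(\ee_1)$ contains a vector with nonzero coefficient on the basis element $\ee_1+\ee_2\in\sB_{\sigma_{1,2}}$ without containing $\ee_1+\ee_2$. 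So ``the horizontal tangent directions force the existence of global sections $\ee_{\ell,\sigma}\otimes\chi^{-\uu}$'' is an unproved leap, and it is the crux of the theorem. (Your treatment of repeated characters via a ``generic equivariant point'' on a positive-dimensional fixed component is likewise only a gesture.) To repair the proof you should either supply a separate argument closing the gap between the projection condition and membership of $\ee_{\ell,\sigma}$ in the relevant flat, or switch to the paper's blowup strategy.
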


\begin{proof}  
  It suffices to show that every very ample toric vector bundle $\cE$
  separates $1$-jets at the $T$-fixed points.  Let $X$ be the smooth toric
  variety determined by the fan $\Sigma$.  Fix a maximal cone
  $\sigma_0 \in \Sigma(d)$ and consider the blowing up $\pi \colon X' \to X$
  of $X$ at $x_{\sigma_0}$ with exceptional divisor
  $D_0 := \pi^{-1}(x_{\sigma_0})$.  Since $\cE$ is very ample, Corollary~1 in
  \cite{BSS} establishes that the toric vector bundle
  $\cE' := \pi^*(\cE) \otimes_{\cO_{X'}} \cO_{X'}(-D_0)$ is globally
  generated.

  To complete the proof, we relate the parliament of polytopes for $\cE'$ and
  $\cE$.  First, we describe the underlying fan for $X'$.  Let
  $\vv_1, \vv_2, \dotsc, \vv_n$ be the primitive lattice vectors generating
  the rays in $\Sigma$.  By reordering these rays if necessary, we may assume
  that $\sigma_0 = \pos(\vv_1, \vv_2, \dotsc, \vv_d)$.  If
  $\vv_0 := \vv_1 + \vv_2 + \dotsb + \vv_d$ and $\Sigma'$ is the fan of $X'$,
  then the primitive lattice vectors generating the rays in $\Sigma'$ are
  $\vv_0, \vv_1, \dotsc, \vv_n$, and the maximal cones are
  $\Sigma'(d) = \bigl( \Sigma(d) \setminus \sigma_0 \bigr) \cup \{ \sigma_1,
  \sigma_2, \dotsc, \sigma_d \}$
  where
  $\sigma_i := \pos(\vv_0, \vv_1, \dotsc, \vv_{i-1}, \vv_{i+1}, \vv_{i+2},
  \dotsc, \vv_d)$
  for $1 \leq i \leq d$; compare with Example~3.1.15 in \cite{CLS}.

  We next specify the linear invariants which determine the toric vector
  bundle $\cE'' := \pi^*(\cE)$ on $X'$.  The characters associated to $\cE''$
  are $\bm{u}_{\cE''}(\sigma') = \bm{u}_{\cE}(\sigma')$ for all
  $\sigma' \in \Sigma(d) \setminus \sigma_0$ and
  $\bm{u}_{\cE''}(\sigma_i) = \bm{u}_{\cE}(\sigma_0)$ for all
  $1 \leq i \leq d$.  The compatible decreasing filtrations corresponding to
  $\cE''$ are identical to those for $\cE$ along the rays generated by $\vv_i$
  for $1 \leq i \leq n$.  Along the new ray generated by $\vv_0$, the
  decreasing filtration for $\cE''$ is
  ${E''}^{\vv_0}(j) = \sum_{\langle \uu, \vv_0 \rangle \geq j}
  E_{\uu}^{\sigma_0}$, where $E_{\uu}^{\sigma_0}$ is the linear subspace
  associated to $\cE$; see Section~\ref{sec:bundles}.  It follows that
  $\sB(\cE'')_{\sigma_i} = \sB(\cE)_{\sigma_0}$ for all $1 \leq i \leq d$.
  
  Finally, to analyze $\cE'$, set $\cL := \cO_{X'}(-D_0)$.  Writing
  $\ww_1, \ww_2, \dotsc, \ww_d \in M$ for the minimal generators the dual cone
  $\sigma^\vee$, Example~\ref{exa:lineBundle} shows that the characters
  associated to the line bundle $\cL$ are
  $\bm{u}_{\!\cL}(\sigma') = \{ \mathbf{0} \}$ for all
  $\sigma' \in \Sigma(d) \setminus \sigma_0$ and
  $\bm{u}_{\! \cL}(\sigma_i) = \{ \ww_i \}$ for all $1 \leq i \leq d$.
  Combining this data with that for $\cE''$, we see that the characters
  associated to toric vector bundle $\cE' = \cE'' \otimes_{\cO_X} \cL$ are
  $\bm{u}_{\cE'}(\sigma') = \bm{u}_{\cE}(\sigma')$ for all
  $\sigma' \in \Sigma(d) \setminus \sigma_0$ and
  $\bm{u}_{\cE'}(\sigma_i) = \{ \uu + \ww_i : \uu \in \bm{u}_{\cE}(\sigma_0)
  \}$
  for all $1 \leq i \leq d$.  As $\cL$ is a line bundle, we also have
  $\sB(\cE')_{\sigma_i} = \sB(\cE'')_{\sigma_i} = \sB(\cE)_{\sigma_0}$ for all
  $1 \leq i \leq d$.  If $\uu_\ell \in \bm{u}_{\cE}(\sigma_0)$ corresponds to
  the vector $\ee_{\ell,\sigma_0} \in E$, then the element
  $\uu_\ell + \ww_i \in \bm{u}_{\cE'}(\sigma_i)$ corresponds to the same
  vector in $E$.  Since $\cE'$ is globally generated and $\cE$ is very ample,
  Theorem~\ref{thm:main1} implies that
  $\uu_\ell + \ww_i \in P_{\ee_{\ell,\sigma_0}}$ and
  $\uu_\ell \in P_{\ell,\sigma_0}$ for all
  $\uu_\ell \in \bm{u}_{\cE}(\sigma_0)$ and for all $1 \leq i \leq d$.
  Applying Theorem~\ref{thm:kjet}, we conclude that $\cE$ separates $1$-jets.
\end{proof} 

An a consequence, we have a polyhedral characterization for very ample toric
vector bundles.

\begin{corollary}
  \label{cor:very}
  A toric vector bundle $\cE$ is very ample if and only if, for all
  maximal cones $\sigma \in \Sigma(d)$, the following conditions hold:
  \begin{enumerate}[\upshape (i)]
  \item the associated characters $\bm{u}(\sigma)$ are vertices of polytopes
    in the parliament for $\cE$,
  \item the edges adjacent to these vertices correspond to the generators of
    the dual cone $\sigma^\vee$,
  \item the edges adjacent to these vertices have lattice length at least $1$, and
  \item the elements indexing these polytopes form a basis in the matroid
    $\Mat(\cE)$.
  \end{enumerate}
\end{corollary}

\begin{proof}
  Combine Theorem~\ref{thm:kjet} and Theorem~\ref{thm:sepIffvery}.
\end{proof}

\begin{proof}[Proof of Theorem~\ref{thm:main?}]
  This follows immediately by combining Theorem~\ref{thm:sepIffample} and
  Theorem~\ref{thm:sepIffvery}.
\end{proof}

\begin{remark}
  \label{rem:lowRank}
  Combining Example~\ref{exa:span} with Theorem~\ref{thm:sepIffvery}, we see
  that the toric vector bundle $\cH$ is globally generated and ample but not
  very ample, answering the first part of Question~7.5 in \cite{HMP}.
  Modifying the proof of Proposition~\ref{pro:lowRank} by replacing some
  non-strict inequalities with strict inequalities, we also obtain a partial
  converse to Lemma~\ref{lem:ample}: if $\cE$ is a toric vector bundle on
  $\PP^d$ with rank at most $d$, then $\cE$ is ample if and only if it
  separates $1$-jets.  Hence, $\cH$ has minimal rank among all
  globally-generated ample toric vector bundles on $\PP^2$ that are not very
  ample.
\end{remark}

\begin{bibdiv}
\begin{biblist}

\bib{BDRS}{article}{
  label={BDS},
  author={Beltrametti, M.C.},
  author={Di Rocco, S.},
  author={Sommese, A.J.},
  title={On generation of jets for vector bundles},
  journal={Rev. Mat. Complut.},
  volume={12},
  date={1999},
  number={1},
  pages={27--45}
}

\bib{BSS}{article}{
  author={Beltrametti, M.C.},
  author={Schneider, M.},
  author={Sommese, A.J.},
  title={Chern inequalities and spannedness of adjoint bundles},
  conference={
    title={Proceedings of the Hirzebruch 65 Conference on Algebraic
      Geometry },
    address={Ramat Gan},
    date={1993},
  },
  book={
    series={Israel Math. Conf. Proc.~9},
    publisher={Bar-Ilan Univ., Ramat Gan},
  },
  date={1996},
  pages={97--107},
}

\bib{CLS}{book}{
  author={Cox, D.A.},
  author={Little, J.B.},
  author={Schenck, H.K.},
  title={Toric varieties},
  series={Graduate Studies in Mathematics~124},
  publisher={American Mathematical Society, Providence, RI},
  date={2011},
}

\bib{DiRocco}{article}{
  label={DiR},
  author={Di Rocco, S.},
  title={\href{http://dx.doi.org/10.1007/PL00004722}%
    {Generation of $k$-jets on toric varieties}},
  journal={Math. Z.},
  volume={231},
  date={1999},
  number={1},
  pages={169--188}
}

\bib{Eis}{book}{
  label={Eis},
  author={Eisenbud, D.},
  title={The geometry of syzygies},
  series={Graduate Texts in Mathematics~229},
  publisher={Springer-Verlag, New York},
   date={2005},
   pages={xvi+243},
}

\bib{Fulton}{book}{
  label={Ful},
  author={Fulton, W.},
  title={Introduction to toric varieties},
  series={Annals of Mathematics Studies~131},
  publisher={Princeton University Press},
  place={Princeton, NJ},
  date={1993},
}

\bib{HMP}{article}{
  author={Hering, M.},
  author={Musta\c{t}\v{a}, M.},
  author={Payne, S.},
  title={\href{http://aif.cedram.org/cedram-bin/article/AIF_2010__60_2_607_0.pdf}%
    {Positivity properties of toric vector bundles}},
  journal={Ann. Inst. Fourier (Grenoble)},
  volume={60},
  date={2010},
  number={2},
  pages={607--640}
}

\bib{Kan}{article}{
  label={Kan},
  author={Kaneyama, T.},
  title={\href{http://projecteuclid.org/euclid.nmj/1118781050}%
    {Torus-equivariant vector bundles on projective spaces}},
  journal={Nagoya Math. J.},
  volume={111},
  date={1988},
  pages={25--40},
}

\bib{KT}{article}{
  author={Karshon, Y.},
  author={Tolman, S.},
  title={The moment map and line bundles over presymplectic toric
    manifolds},
  journal={J. Differential Geom.},
  volume={38},
  date={1993},
  number={3},
  pages={465--484},
}

\bib{Kly}{article}{
  label={Kl1},
  author={Klyachko, A.A.},
  title={Equivariant bundles over toric varieties},
  language={Russian},
  journal={Izv. Akad. Nauk SSSR Ser. Mat.},
  volume={53},
  date={1989},
  number={5},
  pages={1001--1039, 1135},
  translation={
    journal={Math. USSR-Izv.},
    volume={35},
    date={1990},
    number={2},
    pages={337--375},
    issn={0025-5726},
  },
}

\bib{Kly2}{article}{
  label={Kl2},
  author={Klyachko, A.A.},
  title={Vector bundles and torsion free sheaves on the projective plane},
  journal={Max-Planck Institute of Mathematics preprint series},
  volume={59},
  date={1991},
  pages={available at \href{https://www.mpim-bonn.mpg.de/preblob/4712}{\texttt{www.mpim-bonn.mpg.de/preblob/4712}}}
}

\bib{Langer}{article}{
  label={Lan},
  author={Langer, A.},
  title={\href{http://dx.doi.org/10.1090/conm/241/03639}%
    {A note on $k$-jet ampleness on surfaces}},
  conference={
    title={Algebraic geometry: Hirzebruch 70},
    address={Warsaw},
    date={1998},
  },
  book={
    series={Contemp. Math.~241},
    publisher={American Mathematical Society, Providence, RI},
  },
  date={1999},
  pages={273--282},
}

\bib{LM}{article}{
  author={Lanteri, A.},
  author={Mallavibarrena, R.},
  title={\href{http://dx.doi.org/10.1080/00927879908826733}%
    {Higher order dual varieties of projective surfaces}},
  journal={Comm. Algebra},
  volume={27},
  date={1999},
  number={10},
  pages={4827--4851},
}

\bib{PAGI}{book}{
  label={La1},
  author={Lazarsfeld, R.},
  title={Positivity in algebraic geometry I, Classical setting: line bundles
    and linear series},
  series={
    Modern Surveys in Mathematics~48},
  publisher={Springer-Verlag, Berlin},
  date={2004},
}

\bib{PAG2}{book}{
  label={La2},
  author={Lazarsfeld, R.},
  title={Positivity in algebraic geometry II, Positivity for vector bundles,
    and multiplier ideals},
  series={
    Modern Surveys in Mathematics~49},
  publisher={Springer-Verlag, Berlin},
  date={2004},
}

\bib{Payne}{article}{
  label={Pa1},
  author={Payne, S.},
  title={\href{http://dx.doi.org/10.1112/S0010437X08003461}%
    {Moduli of toric vector bundles}},
  journal={Compos. Math.},
  volume={144},
  date={2008},
  number={5},
  pages={1199--1213},
}

\bib{Payne2}{article}{
  label={Pa2},
  author={Payne, S.},
  title={Toric vector bundles, branched covers of fans, and the resolution
    property},
  journal={J. Algebraic Geom.},
  volume={18},
  date={2009},
  number={1},
  pages={1--36},
}

\bib{Sturmfels}{book}{
  label={Stu},
  author={Sturmfels, B.},
  title={Gr\"obner bases and convex polytopes},
  series={University Lecture Series~8},
  publisher={American Mathematical Society, Providence, RI},
  date={1996},
}

\bib{matroid}{collection}{
  label={Whi},
  title={\href{http://dx.doi.org/10.1017/CBO9780511629563}%
    {Theory of matroids}},
  series={Encyclopedia of Mathematics and its Applications~26},
  editor={White, N.},
  publisher={Cambridge University Press, Cambridge},
  date={1986},
}

\bib{Ziegler}{thesis}{
  label={Zie},
  author={Ziegler, G.M.},
  title={\href{http://www.mi.fu-berlin.de/math/groups/discgeom/ziegler/Preprintfiles/032PREPRINT.pdf}%
    {Combinatorial models for subspace arrangements}},
  type={Habilitations-Schrift},
  organization={Technische Universit\"at Berlin},
  date={1992},
}
		
\end{biblist}
\end{bibdiv}

\raggedright

\end{document}